\newtheorem{theorem}{Theorem}[section]
\newtheorem{proposition}[theorem]{Proposition}
\newtheorem{lemma}[theorem]{Lemma}
\newtheorem{remark}[theorem]{Remark}
\newtheorem{example}[theorem]{Example}
\newtheorem{corollary}[theorem]{Corollary}
\newcommand{\Aut}{\mathrm{Aut}}
\newcommand{\tr}{\mathrm{tr}}
\newcommand{\fix}{\mathrm{Fix}}
\newcommand{\SL}{\mathrm{SL}}
\newcommand{\GL}{\mathrm{GL}}
\newcommand{\PGL}{\mathrm{PGL}}
\begin{document}
\baselineskip=15.5pt

\title[Twisted conjugacy in general linear groups]{Twisted conjugacy in $\SL_n$ and $\GL_n$ over subrings of $\bar{\mathbb F}_p(t)$.} 
\author[O.  Mitra] {Oorna Mitra } 
\email{oornam@cmi.ac.in} 
\author[P. Sankaran]{ Parameswaran Sankaran}
\email{sankaran@cmi.ac.in} 
\address{Chennai Mathematical Institute, SIPCOT IT Park, Siruseri, Kelambakkam, 603103.}
\subjclass{20F28, 20G35} 
\keywords{Twisted conjugacy, general linear group, special linear group, automorphism group, Reidemeister number}
\thispagestyle{empty}
\date{}

\begin{abstract}
Let $\phi:G\to G$ be an automorphism of an infinite group $G$.  One has an equivalence relation $\sim_\phi$ on $G$ defined as 
$x\sim_\phi y$ if there exists a $z\in G$ such that $y=zx\phi(z^{-1})$.  The equivalence classes are called $\phi$-twisted conjugacy classes 
and the set $G/\!\!\sim_\phi$ of equivalence classes is denoted $\mathcal R(\phi)$. The cardinality $R(\phi)$ of $\mathcal R(\phi)$ is 
called the Reidemeister number of $\phi$.  We write $R(\phi)=\infty$ when $\mathcal R(\phi)$ is infinite.  We say that $G$ has the $R_\infty$-{\it property} if  $R(\phi)=\infty$ for every automorphism $\phi$ of $G$.    We show that the groups $G=\GL_n(R), \SL_n(R)$  have the $R_\infty$-property for all $n\ge 3$ 
when $ F[t]\subset R\subsetneq  F(t)$ where $F$ is a subfield of $\bar{\mathbb F}_p$.  When $n\ge 4$, we show that any subgroup $H\subset \GL_n(R)$ that contains $\SL_n(R)$ also has the $R_\infty$-property.  
\end{abstract}

\maketitle

\section{Introduction} 
Given an endomorphism $\phi:G\to G$ of a group $G$, one has the $\phi$-twisted conjugacy action of $G$ on itself 
defined by $g.x=gx\phi(g^{-1})$.  The orbits of this action are called the $\phi$-twisted conjugacy classes.  
The cardinality of the orbit space $\mathcal R(\phi)$ is called the Reidemeister number of $\phi$ and is denoted $R(\phi)$.  
When the orbit space is infinite 
we write $R(\phi)=\infty$.  One says that $G$ has the $R_\infty$-property if $R(\phi)=\infty$ for every automorphism 
$\phi$ of $G$.

The notion of Reidemeister number originated in Nielsen fixed point theory.  
The problem of determining which classes of countably infinite groups have the $R_\infty$-property was 
 formally posed by Fel'shtyn and Hill \cite{felshtyn-hill} and is an 
active area of research in group theory.    Many classes of groups have been classified according to whether or not they have the $R_\infty$-property.   This is an interesting classification problem because there is no 
specific method or procedure to decide whether a given group or class of groups have the $R_\infty$-property.   
The tools needed are to be developed depending on the class of groups under consideration and have often involved using results and techniques drawn from other branches of mathematics, including 
geometric group theory \cite{ll}, combinatorial group theory \cite{g-w}, Bass-Serre theory \cite{mitra-sankaran-n2},  number theory \cite[\S 6]{gss}, algebraic groups and Lie theory \cite{ms-tg}, etc.

 In this paper we address this question for the 
class of general and special linear groups, $\GL_n(R), \SL_n(R), n\ge 3$, over an integral domain $R$ such that 
$F[t]\subset R\subsetneq F(t)$ where $F$ is a subfield of $\bar{\mathbb F}_p$, $p,$ a prime.

Let $K=\mathbb{F}_q(t)$ and $S$ be a non-empty subset of the set $\mathscr{V}$  of all inequivalent valuations on $K$. 
Then $K(S) := \{x \in K \mid \nu(x) \geq 0~\forall \nu \notin S\}$ is called the ring of $S$-integers of $K$. 
Note that if $R$ is a ring such that $\mathbb{F}_q[t] \subset R \subsetneq K$, then $R$ is a ring of $S$-integers of K, for an appropriate non-empty, possibly infinite subset $S$. In fact, for any non-empty, proper subset $S$ of $\mathscr{V}$, we have $\mathbb{F}_q[u] \subset K(S) \subsetneq \mathbb{F}_q(u) \subset K$ for an element $u\in K$ that is transcendental over $\mathbb F_q$. So without loss of generality, we may assume $\nu_\infty \in S$ where $\nu_\infty(f(t)/g(t))= \deg(g(t))-\deg(f(t))$, in which case $\mathbb{F}_q[t] \subset K(S) \subsetneq K$ holds.

Now, for all $n \geq 2$, $\SL_n(K(S))$ is an $S$-arithmetic subgroup of $\SL_n(K)$ and if $S$ is finite, it is a lattice in $\prod_{\nu \in S}\SL_n(K_\nu)$ when embedded diagonally, where $K_\nu$ is the completion of $K$ with respect to  the valuation $\nu$ \cite[(3.2.5), p. 63]{mar}. It was shown in \cite{ms-tg} that any irreducible lattice in a connected non-compact semisimple real Lie group with finite centre has the $R_\infty$-property. The present work may be viewed as a first step in classifying, according to the $R_\infty$-property, lattices in semisimple connected algebraic groups over local fields of positive characteristics.  
We consider only the case $n\ge 3$ here.  The groups $\GL_2(R), \SL_2(R)$ required an entirely different approach.  It has been shown that 
that $\GL_2(\mathbb F_q[t]), \GL_2(\mathbb F_q[t,t^{-1}]), \SL_2(\mathbb F_q[t])$ have the 
$R_\infty$-property.  See  \cite{mitra-sankaran-n2}.

The $R_\infty$-property for linear 
groups over fields of characteristic zero was considered by Nasybullov \cite{nasyb1}, \cite{nasyb2} and 
Felshtyn and Nasybullov \cite{fn} culminating in the result that 
a Chevally group of classical type over an algebraically closed 
field $F$ of characteristic zero has the $R_\infty$-property if and only if 
$F$ has finite transcendence degree over $\mathbb Q$.
 
We now state our main results.

\begin{theorem}  \label{fpbar} Let $n\ge 3$ and let 
 $F$ be a subfield of  $\bar{\mathbb F}_p$. 
Let $R$ be an integral domain such that $F[t]\subset R\subsetneq F(t)$.  
Then the groups $G=G(R)=\GL_n(R), \SL_n(R)$ have the $R_\infty$-property. 
\end{theorem}

\begin{theorem}\label{sandwich}  Let $R$ be as in Theorem \ref{fpbar}. 
Suppose that $H$ is any group such that $\SL_n(R)\subset H\subset \GL_n(R)$.   Then $H$ 
has the $R_\infty$-property in the following cases: (a) $n\ge 4$, (b) $n=3$ and $\det(H)\subset F^\times$.  

\end{theorem}

We have not been able to show the $R_\infty$-property for $H$ as in Theorem \ref{sandwich} unconditionaly when $n=3$.

Our proof of Theorem \ref{fpbar} relies heavily on some classical results concerning general and special linear groups over integral domains which are not fields, namely: (i) 
the description, due to O. T. O'Meara \cite{omeara}, of the automorphisms of $G=\SL_n(R), \GL_n(R), n\ge 3$,
which is valid for any integral 
domain $R$ that is not a field,  (ii)  the group $\SL_n(R), n\ge 3,$ is perfect and is generated by elementary matrices 
when $R$ is a Euclidean domain, (see \cite{rosenberg}).  

We shall prove Theorem \ref{fpbar} first for the case when $F$ is a finite field $\mathbb F_q$.  In this case, we prove that 
every outer automorphism $[\phi]$ of $G=\GL_n(R)$ or $\SL_n(R)$ is represented by a convenient automorphism $\phi$ using O'Meara's theorem.   The proof in this case crucially uses the observation that when $\phi$ is induced by 
an automorphism of the ring $R$, it  
 has finite order.   
We also show that, in this case, the fixed subgroup of $\phi$ contains a 
subgroup isomorphic to $\SL_n(\mathbb F_p[t])$.    In the more general case, it is not true that 
the outer automorphisms are represented by finite order automorphisms.  
(For example, automorphisms of $G$ induced by a Frobenius automorphism of the ring $R$.)   Nevertheless,  any automorphism that is induced by an automorphism of the ring $\phi:R\to R$, 
restricts to an automorphism of $G_q= G\cap \GL_n(R_q)$ where $R_q=R\cap 
\mathbb F_q(t)$ for some $q=p^e$.   This allows us to extend 
our proof technique in case when $F$ is a finite field to the more general case when $F\subset \bar {\mathbb F}_q$.

Although the automorphism group of $H$ as in Theorem \ref{sandwich} is not known to us, any automorphism $\theta$ of 
$H$ restricts to an automorphism $\theta'$ of $\SL_n(R)$.  After choosing a convenient representative of the 
outer automorphism class of $\theta$, we find a sequence $(y_k)$ of elements of $\SL_n(R)$ which are in pairwise 
distinct $\theta'$-twisted conjugacy classes and show that they remain in pairwise distinct $\theta$-twisted conjugacy classes, under the hypotheses of the theorem. 

Recently, Garge and Mitra \cite{gm} have obtained  results analogous to those of 
Theorem \ref{fpbar} for the classical groups $\mathrm{SO}_n(R), \mathrm{Sp}_n(R)$.

E. Jabara \cite[Theorem C]{jabara} has shown that a finitely generated linear group that admits 
a finite order automorphism $\phi$ with finite Reidemeister number is necessarily virtually solvable.  His proof makes 
use of the deep work of Hrushovski, Kropholler, Lubotzky, and Shalev \cite{hkls}.   It should be noted that 
typically the groups $\SL_n(R), \GL_n(R)$ with $R$ as in our main Theorem are not finitely generated.


\section{Some basic results on twisted conjugacy}\label{basicresults}
Let $\phi:G\to G$ be an automorphism of a group $G$.  We shall denote by $[x]_\phi$ the $\phi$-twisted conjugacy class of $x$ and 
by $\mathcal R(\phi)$ the set of all $\phi$-twisted conjugacy classes in $G$.  

We collect here some basic results concerning twisted conjugacy and the $R_\infty$-property which are relevant for our purposes. 
Let $G$ be an infinite group (not necessarily finitely generated) and let $K\subset G$ be a normal subgroup.  Let $\eta:G\to H$ be the 
canonical quotient map where $H=G/K$.   Suppose that $\phi:G\to G$ is an automorphism such that $\phi(K)=K$ so that we have 
the following diagram in which the rows are exact and isomorphisms $\phi',\bar\phi$ are defined by $\phi$:
\[ 
\begin{array}{ l l l l l l llll }
1 &\to & K&\to & G& \to & H& \to &1\\
& &\downarrow\phi' && \downarrow \phi& &\downarrow \bar \phi &&\\
1&\to &K&\to &G& \to &H&\to& 1\\
\end{array}\eqno(1)
\]
Any $\phi$-twisted conjugacy class in $G$ maps into a $\bar\phi$-twisted conjugacy class in $H$.  Also, any $\phi'$-twisted conjugacy 
class in $K$ is contained in a $\phi$-twisted conjugacy class of $G$.   Moreover, if $H$ is finite, then any $\phi$-twisted conjugacy 
class contains at most $o(H)$ many distinct $\phi'$-twisted conjugacy classes of $K$.  So, if $R(\phi')=\infty$, then $R(\phi)=\infty$.  
See \cite[Lemma 2.2]{ms-cmb} 
and its proof.   We summarise these results as a lemma.

We recall that a subgroup $K$ of $G$ is {\it characteristic} in $G$ if every automorphism of $G$ restricts to an automorphism of $K$.     

\begin{lemma} \label{characteristic}
Suppose that $\phi:G\to G$ is an automorphism of an infinite group such that the rows in (1) are exact and 
the homomorphisms $\phi',\bar \phi$ are isomorphisms. Then: \\
(i) If $R(\bar\phi)=\infty$, then $R(\phi)=\infty$.  In particular, if $K$ is characteristic in $G$, then $G$ has 
the $R_\infty$-property if $H$ does.\\
(ii) Suppose that $H$ is finite. Then $R(\phi)=\infty$ if $R(\phi')=\infty$.  In particular, if $K$ is characteristic and 
has finite index in $G$, then $G$ has the $R_\infty$-property if $K$ does. \hfill $\Box$ \\
\end{lemma}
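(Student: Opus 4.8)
The plan is to derive everything from the naturality of twisted conjugacy with respect to the two maps $\iota\colon K\hookrightarrow G$ and $\eta\colon G\to H$, so that the statement reduces to the two elementary observations recorded just before it. For part (i) I would first verify that $\eta$ is equivariant for the twisted actions: for all $g,x\in G$,
\[
\eta\bigl(gx\phi(g^{-1})\bigr)=\eta(g)\,\eta(x)\,\bar\phi\bigl(\eta(g)^{-1}\bigr),
\]
using $\eta\circ\phi=\bar\phi\circ\eta$. Hence $\eta$ carries each $\phi$-twisted class of $G$ into a single $\bar\phi$-twisted class of $H$, so it induces a well-defined map $\bar\eta\colon\mathcal R(\phi)\to\mathcal R(\bar\phi)$, which is surjective because $\eta$ is. This gives $R(\phi)\ge R(\bar\phi)$, and in particular $R(\bar\phi)=\infty$ forces $R(\phi)=\infty$. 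The ``in particular'' clause is then immediate: if $K$ is characteristic, every $\phi\in\Aut(G)$ satisfies $\phi(K)=K$, so $\bar\phi\in\Aut(H)$ is defined, and if $H$ has the $R_\infty$-property then $R(\bar\phi)=\infty$ for every $\phi$, whence $G$ has the $R_\infty$-property.

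For part (ii) I would argue in the opposite direction, through $\iota$. Each $\phi'$-class $[y]_{\phi'}=\{ky\phi(k^{-1}):k\in K\}$ is visibly contained in the single $\phi$-class $[y]_\phi$ of $G$, so $[y]_{\phi'}\mapsto[y]_\phi$ is a well-defined map from $\mathcal R(\phi')$ to the set of $\phi$-classes of $G$ meeting $K$; the task is to bound its fibers. Suppose $y,y'\in K$ lie in the same $\phi$-class, say $y'=gy\phi(g^{-1})$ for some $g\in G$. Applying $\eta$ and using $\eta(y)=\eta(y')=1$ yields $\bar\phi(\eta(g))=\eta(g)$, that is, $\eta(g)\in\fix(\bar\phi)$. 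Choosing once and for all a representative $g_a\in G$ for each $a\in\fix(\bar\phi)$ and writing $g=kg_a$ with $a=\eta(g)$ and $k\in K$, one computes $y'=k\,\bigl(g_ay\phi(g_a^{-1})\bigr)\,\phi(k^{-1})$, where $z_a:=g_ay\phi(g_a^{-1})$ satisfies $\eta(z_a)=a\,\bar\phi(a)^{-1}=1$, so $z_a\in K$; thus $y'\in[z_a]_{\phi'}$. Consequently the $\phi'$-classes lying in $[y]_\phi\cap K$ are among the at most $|\fix(\bar\phi)|\le o(H)$ classes $[z_a]_{\phi'}$, so every fiber has at most $o(H)$ elements. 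Since each element of $K$ lies in some $\phi$-class of $G$, this gives $R(\phi')\le o(H)\cdot R(\phi)$, and hence $R(\phi')=\infty$ implies $R(\phi)=\infty$. The ``in particular'' clause follows as in (i): if $K$ is characteristic of finite index then $H$ is finite and $\phi'\in\Aut(K)$ for every $\phi\in\Aut(G)$, so the $R_\infty$-property of $K$ propagates to $G$.

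The only genuinely substantive point, and the step I expect to require the most care, is the fiber bound in (ii): one must recognize that moving between two $\phi$-conjugate elements of $K$ forces the conjugating element to project into $\fix(\bar\phi)$, and then absorb its $K$-component into the $\phi'$-conjugation so as to land in a controlled finite list of $\phi'$-classes. Part (i), together with the reductions in both ``in particular'' clauses, is purely formal once the equivariance of $\eta$ is in place.
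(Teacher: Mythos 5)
Your proposal is correct and follows essentially the same route as the paper, which records exactly these two facts (the induced surjection $\mathcal R(\phi)\to\mathcal R(\bar\phi)$ for (i), and the bound of at most $o(H)$ distinct $\phi'$-classes inside each $\phi$-class for (ii)) and delegates the details to the cited Lemma 2.2 of \cite{ms-cmb}. Your fiber-bound argument via $\eta(g)\in\fix(\bar\phi)$ and absorption of the $K$-component into the $\phi'$-conjugation is precisely the content the paper leaves to that reference.
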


The following lemma is well-known and can be found, for example, in \cite[\S3]{g-s}.

Let $g\in G$ and let $\iota_g:G\to G$ denote the inner automorphism $x\mapsto gxg^{-1}$.  

\begin{lemma}\label{outer} 
If $\phi:G\to G$ is any automorphism, then $[x]_{\iota_g\circ\phi}\mapsto [xg]_{\phi}$ defines 
a bijection $\mathcal R(\iota_g\circ \phi)\to \mathcal R(\phi)$.  In particular,
$R(\iota_g\circ \phi)=R(\phi)$.   \hfill $\Box$
\end{lemma}

The following lemma can be proved along the same lines as \cite[Lemma 3.4]{g-s} or \cite[ Lemma 2.3]{gs-pjm-2016}.  
For the sake of completeness we give the proof.
\begin{lemma} \label{fixedgroup}
Let $\theta:G\to G$ be a finite order automorphism.  Let $r=o(\theta).$
(i)  Suppose that $[x]_\theta=[y]_\theta$.  Then $\prod_{0\le j<r}\theta^j(x)$ and $\prod_{0\le j<r}\theta^j(y)$ are conjugates in $G$.
(ii) Suppose that $x,y \in \fix(\theta)$ and that 
$[x]_\theta=[y]_\theta$.  Then $x^r$ and $y^r$ are conjugates in $G$. \\
\end{lemma}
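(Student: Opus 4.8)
The plan is to reduce everything to a single telescoping computation and to deduce part (ii) immediately from part (i). Write $P(x):=\prod_{0\le j<r}\theta^j(x)=x\,\theta(x)\,\theta^2(x)\cdots\theta^{r-1}(x)$ for the product appearing in the statement, taken left to right in increasing order of $j$. For part (ii), note first that if $x\in\fix(\theta)$ then $\theta^j(x)=x$ for every $j$, so that $P(x)=x^r$; thus once (i) is established, applying it to $x,y\in\fix(\theta)$ shows that $x^r=P(x)$ and $y^r=P(y)$ are conjugate, which is precisely (ii). So the whole content lies in part (i).

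For part (i), suppose $[x]_\theta=[y]_\theta$, so that $y=gx\theta(g^{-1})$ for some $g\in G$. Applying the homomorphism $\theta^j$ gives $\theta^j(y)=\theta^j(g)\,\theta^j(x)\,\theta^{j+1}(g^{-1})$ for each $j$. I would substitute these into $P(y)=\prod_{0\le j<r}\theta^j(y)$ and multiply the $r$ triples together in order. At each internal junction, between the block for $j$ and the block for $j+1$, one meets the factor $\theta^{j+1}(g^{-1})\,\theta^{j+1}(g)=e$, so the product telescopes, leaving only the outermost factors together with the surviving $\theta^j(x)$'s:
\[
P(y)=g\,\bigl(x\,\theta(x)\cdots\theta^{r-1}(x)\bigr)\,\theta^{r}(g^{-1})=g\,P(x)\,\theta^r(g^{-1}).
\]
Finally I would invoke the hypothesis $r=o(\theta)$, which gives $\theta^r=\mathrm{id}$ and hence $\theta^r(g^{-1})=g^{-1}$; therefore $P(y)=gP(x)g^{-1}$, exhibiting $P(x)$ and $P(y)$ as conjugates in $G$.

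There is no serious obstacle here, since the argument is a direct manipulation; the only point demanding care is the bookkeeping. One must fix the order of multiplication in $P$, track the indices so that the interior cancellations $\theta^{j+1}(g^{-1})\,\theta^{j+1}(g)=e$ line up correctly, and use $\theta^r=\mathrm{id}$ precisely at the two ends, where the index wraps around from $r$ back to $0$. With the convention fixed, the telescoping is forced, and the conjugating element is exactly the $g$ realizing the twisted conjugacy $y=gx\theta(g^{-1})$.
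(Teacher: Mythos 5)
Your proof is correct and is essentially the argument the paper relies on: the paper does not spell out a proof but defers to \cite[Lemma 3.4]{g-s} and \cite[Lemma 2.3]{gs-pjm-2016}, where the same telescoping computation $P(y)=gP(x)\theta^r(g^{-1})=gP(x)g^{-1}$ is carried out. Your reduction of (ii) to (i) via $P(x)=x^r$ for $x\in\fix(\theta)$ is also exactly the intended one.
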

\begin{proof}
Suppose that $y=zx\theta(z^{-1} )$ for some $z\in G$. Applying $\theta^j$ to both sides of the equality we obtain 
that $\theta^j(y)=\theta^j(z) \theta^j(x)\theta^{j+1}(z^{-1})$.   Now taking product in order as we vary $j$ in $\{0,1,\ldots, r-1\}$, we obtain that $\prod_{0\le j<r} \theta^j(y)=\prod_{0\le j<r} \theta^j (z) \theta ^j(x)\theta^{j+1}(z^{-1})
=z\prod_{0\le j<r}\theta^j(x) \theta^r(z^{-1})$.  Since $\theta^r=id$, the first assertion follows. The second 
assertion is a special case of the first. 
\end{proof}


\section{Automorphisms of $\GL_n(R)$ and $\SL_n(R)$} \label{omeara}
In this section we recall some properties of the groups $\SL_n(R)$ and $\GL_n(R)$ as well as their 
automorphisms where $R$ is an integral domain, but not a field with $F[t] \subset R \subset F(t)$ and $F$ is a field. Up to the end of \S\ref{commutations},  we do not assume that $F$ is a subfield of $\bar{\mathbb F}_p$.  

Since $F[t]$ is a PID, any such $R$ is a localization of $F[t]$ by some multiplicatively closed subset of $F[t]$. Moreover, such an $R$ is a Euclidean domain as any localization of a Euclidean domain is again a Euclidean domain.

Let $E_n(R)\subset \GL_n(R)$ denote the subgroup generated by 
the elementary matrices $e_{ij}(\lambda),\lambda \in R, 1\le i, j\le n, i\ne j$.  
By definition, $e_{ij}(\lambda)$ is the matrix 
whose diagonal entries are $1$, the $(i,j)$-th entry is $\lambda$ and all other entries are zero.  

One has the obvious inclusions 
$E_n(R)\subset \SL_n(R)\subset \GL_n(R)$.    As $R$ is a Euclidean domain, we have $E_n(R)=\SL_n(R)$ 
for $n\ge 2$; see \cite[Theorem 2.3.2]{rosenberg}.   When $n\ge 3$, we also have $E_n(R)=[E_n(R),E_n(R)]$; this follows 
from the observation that the commutator $[e_{ik}(x),e_{kj}(1)]$ equals $e_{ij}(x)~\forall x\in R$ if $i,j,k$ are all distinct.    It follows 
that, when $n\ge 3$,  $\SL_n(R)$ is perfect and it equals the derived group $[H,H]$
where $H$ is any subgroup of $\GL_n(R)$ that contains $\SL_n(R)$.  
 
\noindent
{\bf Notations:} {We denote by $\delta(a_1,\ldots, a_n)\in GL_n(R)$ the diagonal matrix with $(i,i)$-entry equals $a_i\in R^\times, 1\le i\le n$.  Also, we denote by $\delta(A_1,\ldots, A_k)$ the block diagonal matrix with $j^{th}$ diagonal block is equal 
to $A_j$.  We denote by $h(a)$ the matrix $\delta(I_{n-1},a)\in \GL_n(R)$ where $a\in R^\times$.  As above, for $\lambda\in R $ and $i\ne j$, $e_{ij}(\lambda)\in \SL_n(R)$ is 
the unipotentent triangular matrix whose $(i,j)$-th entry is $\lambda$ and all other 
non-diagonal entries are zero.}

\subsection{Theorem of O'Meara}
Let $n\ge 3$ and let $G$ be one of the groups $\SL_n(R),  \GL_n(R)$ where $R$ is any (commutative) integral domain which is not a field. Let $K$ be the fraction field of $R$.
An automorphism $\phi:G\to G$ is called {\it standard} if it is in the subgroup generated by the following four types of automorphisms:\\
(i) {\it Conjugation}:  Conjugation by $g\in \GL_n(K)$, denoted $\iota_g:G\to G,$ is defined as  $x\mapsto gxg^{-1}$.    
It is inner if $g\in G$.\\
(ii) {\it Ring automorphism}: These are automorphisms of $G$ induced by automorphisms of the ring $R$.  
We make no distinction in the notation between the automorphism of the ring 
$R$ and the induced automorphism of $G$.  \\
(iii) {\it Homothety}:   Recall that a homomoprhism $\mu=\mu_\chi: G\to G$ is a {\it homothety} 
if there is a character $\chi:G\to R^\times$ 
such that $\mu_\chi(x)=\chi(x)x$.  Since $x,\chi(x)x\in G$, it follows that $\chi(x)I_n\in G$. Being a scalar matrix, $\chi(x)I_n$ belongs to the centre of $G$.  
A homothety $\mu_\chi$ fails to to be injective if and only if there exists a central element $zI_n\in G$ other than $I_n$ such that $\chi(zI_n)=z^{-1}$.  

Let $\mu_\chi$ be an automorphism of $G$.    
Observe that,  since $\SL_n(R)=[G,G]$ is characteristic in $G$, $\mu_\chi$ restricts to the identity automorphism on $\SL_n(R)$.
In particular, any homothety is the identity automorphism when $G=\SL_n(R)$.  \\
(iv) {\it Contragredient}:   The contragredient automorphism $\epsilon:G\to G$, is defined as $x\mapsto {}^{t}\!x^{-1},~\forall x\in G$.

O. T. O'Meara \cite{omeara} has shown that for any integral domain $R$ which is not a field, any automorphism of $G$ is standard, provided $n\ge 3$. 

\begin{theorem} {\em (O'Meara \cite{omeara})} \label{aut-omeara}
Let $R$ be an integral domain 
with fraction field $K\ne R$ and 
let $G=\GL_n(R)$ or $\SL_n(R)$ where $n\ge 3$.  
Then any automorphism $\phi:G\to G$ can be expressed as follows:
\[\phi=\mu_\chi\circ \rho\circ \iota_g ~~\textrm{or}~~\phi=\mu_\chi\circ \rho\circ \iota_g \circ \epsilon\]
where $\mu_\chi$ is a homothety automorphism corresponding to a character $\chi: G\to R^\times, g\in \GL_n(K), \rho:G\to G$ is induced by a ring 
automorphism denoted by the same symbol $\rho:R\to R$ and $\epsilon$ is the contragredient $x\mapsto {}^t\! x^{-1}$.
\end{theorem}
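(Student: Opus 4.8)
The plan is to reconstruct an arbitrary automorphism from its action on the transvections, which constitute a group-theoretically distinguished class of elements whose commutation pattern encodes the projective geometry of the free module $R^n$. I would organize the argument in three stages matching the three families of standard automorphisms in the conclusion. The first and most delicate stage is an \emph{intrinsic} recognition of the elementary transvections $e_{ij}(\lambda)$, without reference to the module structure. Following O'Meara, to each non-central $\sigma\in G$ one attaches its residual space $\im(\sigma-1)\subseteq R^n$ and its fixed space $\ker(\sigma-1)$; a transvection is exactly an element whose residual space is a free rank-one summand (a ``line'') lying inside its fixed hyperplane. The task is to translate this module-theoretic condition into purely group-theoretic data — via the structure of centralizers and of the abelian subgroups generated by pairwise commuting transvections — so that any automorphism $\phi$ is forced to permute the transvection subgroups among themselves. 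Because homotheties and the centre $Z(G)$ interfere with such a characterization, the precise statement one establishes is that, after composing $\phi$ with a suitable homothety $\mu_\chi$, the set of transvection subgroups is preserved.

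Having arranged (after the $\mu_\chi$ correction) that $\phi$ permutes the transvection subgroups while respecting commutation, I would read off an induced incidence-preserving bijection on the rank-one summands (the ``points'') and the hyperplanes of $R^n$, that is, a collineation of the projective geometry attached to $R^n$. At this point one invokes the Fundamental Theorem of Projective Geometry, in the form valid for a free module over an integral domain after passing to the field of fractions $K$ of $R$: every such incidence-preserving bijection is induced either by a $\rho$-semilinear automorphism of $R^n$, yielding a pair $(\rho,g)$ with $\rho\in\Aut(R)$ and $g\in\GL_n(K)$, or by a $\rho$-semilinear isomorphism onto the dual module. The second, ``contravariant'' alternative is precisely the contragradient case, and I would dispose of it by first composing $\phi$ with $\epsilon$ so as to reduce to the covariant case. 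One then checks that, because $\phi$ sends transvections over $R$ to transvections over $R$ and normalizes $\SL_n(R)=E_n(R)$, the semilinear datum descends: the field automorphism carries $R$ to $R$ (so $\rho\in\Aut(R)$ genuinely) and $g$ may be taken in $\GL_n(R)$ rather than merely in $\GL_n(K)$.

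Finally I would compare $\phi$ with $\rho\circ\iota_g$ (respectively $\rho\circ\iota_g\circ\epsilon$): by construction these agree on every transvection, hence on the subgroup $E_n(R)=\SL_n(R)$ that the transvections generate. The remaining discrepancy is therefore a homomorphism $G\to Z(G)$ that is trivial on $\SL_n(R)$, i.e.\ a homothety $\mu_\chi$ for a character $\chi\colon G\to R^\times$; when $G=\SL_n(R)$ with $n\ge 3$, perfectness forces $\chi$ to be trivial, recovering both displayed normal forms. I expect the main obstacle to lie entirely in the first stage: producing a group-theoretic recognition of transvections that is robust over an \emph{arbitrary} integral domain, where $R^n$ is free but its submodules need not be direct summands, so that the usual vector-space arguments over a field are unavailable. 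This is exactly O'Meara's residual-space technique, and proving that an automorphism preserves this class of elements up to the homothety correction is the heart of the theorem; once the geometry has been reconstructed, the passage through the Fundamental Theorem of Projective Geometry and the concluding homothety bookkeeping are comparatively formal.
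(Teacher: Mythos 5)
You should first be aware that the paper does not prove this statement at all: Theorem \ref{aut-omeara} is imported verbatim from O'Meara \cite{omeara} and used as a black box, so there is no in-paper argument to measure your proposal against; the only meaningful comparison is with O'Meara's original proof. Judged against that, your outline reproduces its architecture faithfully: residual/fixed-space calculus, group-theoretic recognition of transvections, reconstruction of a collineation, the Fundamental Theorem of Projective Geometry over the fraction field, duality in the contragredient case, and the closing homothety bookkeeping. But as it stands it is a roadmap, not a proof. The entire technical content lies in your ``first stage'', which you defer with the remark that it ``is exactly O'Meara's residual-space technique''. Nothing in the proposal indicates how centralizer and commutation data actually pin down transvections over an arbitrary domain, how one excludes that an automorphism carries a transvection to (say) a dilation times a central element, or why the recognition survives the homothety ambiguity; that is the bulk of O'Meara's paper, and your attempt has a genuine gap exactly where you yourself locate the difficulty. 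A smaller slip: the residual space $\im(\sigma-1)$ and the fixed space $\ker(\sigma-1)$ must be formed inside $V=K^n$, $K$ the fraction field of $R$, not inside $R^n$ as you write, since $\im(\sigma-1)\subseteq R^n$ need not be a summand or even projective; O'Meara's calculus (dimension counts, $\textrm{res}(\sigma\tau)\subseteq \textrm{res}(\sigma)+\textrm{res}(\tau)$, and so on) is linear algebra over $K$.

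Separately, one step of your plan would genuinely fail in the stated generality: the descent ``$g$ may be taken in $\GL_n(R)$ rather than merely in $\GL_n(K)$''. This is false for a general integral domain, and in fact the theorem as transcribed in the paper (with $g\in\GL_n(R)$) is accurate only under extra hypotheses such as $R$ being a principal ideal domain, which covers $F[t]$ and $F[t,t^{-1}]$, the only rings the paper uses. Concretely, let $R$ be a Dedekind domain with a non-principal ideal $\mathfrak a$ whose class has order $n$ (e.g.\ the ring of integers of $\mathbb Q(\sqrt{-23})$ with $n=3$). The lattice $\mathfrak a^{\oplus n}\subset K^n$ is free, since $\mathfrak a^{\oplus n}\cong R^{n-1}\oplus \mathfrak a^n$ and $\mathfrak a^n$ is principal; write $\mathfrak a^{\oplus n}=hR^n$ with $h\in \GL_n(K)$. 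Its stabilizer in $\GL_n(K)$ is exactly $\GL_n(R)$ because $(\mathfrak a:\mathfrak a)=R$, so $h$ normalizes $\GL_n(R)$ and $\iota_h$ is an automorphism of $\GL_n(R)$. Yet $\iota_h$ is not of the form $\mu_\chi\circ\rho\circ\iota_g$ or $\mu_\chi\circ\rho\circ\iota_g\circ\epsilon$ with $g\in\GL_n(R)$: any character $\chi$ is trivial on each $e_{ij}(r)$ (a commutator of elementary matrices when $n\ge 3$), and evaluating both sides on all $e_{ij}(r)$ and working with the matrix units $E_{ij}$ forces $\rho=\textrm{id}$ and $h\in K^\times\GL_n(R)$, i.e.\ $\mathfrak a$ principal, a contradiction. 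O'Meara's actual conclusion accordingly keeps the conjugating datum as a semilinear automorphism of $K^n$ normalizing $G$; reducing it to a ring automorphism composed with $\iota_g$, $g\in\GL_n(R)$, requires knowing that every $\GL_n(R)$-stable lattice in $K^n$ is $c\cdot R^n$ for some $c\in K^\times$, which holds for the PIDs relevant to this paper but not in general. A completed write-up along your lines must either add such a hypothesis or weaken the conclusion to O'Meara's semilinear form.
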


\begin{remark} \label{remarkonomeara}
{\em 
O'Meara's theorem is stated in the module theoretic language.  His expression for an automorphism involves 
a semilinear automorphism $\Phi_g$ of $M= R^n$, which  
corresponds to the composition $\rho \circ \iota_g$ where 
$\rho:K\to K$ is a field automorphism and a $K$-linear isomorphism $g:K^n\to K^n$. 
 His Theorem B  in \S5 (op. cit.) states that a seimiliear automorphism of $K^n$ yields an automorphism of $M$ if 
and only if $g(M)=\mathfrak a.M$ for an invertible fractional ideal $\mathfrak a$ of $R$ and $\rho(R)=R$.  When $R$ is a PID (as in our case), every fractional ideal is invertible and moreover $\mathfrak a=\lambda R$ for some $\lambda\in K^\times$.  

Theorem C in \S5, {\it loc cit.}, 
is the analoguous statement when the contragredient is involved. Thus, when $R$ is a subring of $\bar{\mathbb F}_p(t)$
which is not a field, the element $g$ in Theorem \ref{aut-omeara} may be taken to be in $\GL_n(R)$.  }
\end{remark}

\subsection{Commutation relations} \label{commutations}
We have the following commutation relations:\\ (i) $ \rho\circ \iota_g=\iota_{\rho(g)}\circ \rho, \mu_\chi\circ \iota_g=\iota_g\circ \mu_\chi,$\\ 
(ii) $\epsilon\circ \rho=\rho\circ \epsilon,\\ $
(iii) $\epsilon \circ \iota_g=\iota_{\epsilon(g)}\circ \epsilon$, \\ 
(iv) $\mu_\chi\circ \rho= \rho\circ \mu_\eta$ where $\eta=\rho_0^{-1}\circ \chi\circ \rho:G\to R^\times$. Here $\rho_0:R^\times\to R^\times$ is defined by $\rho:R\to R$.\\ 
(v) $\mu_{-\chi}\circ \epsilon=\epsilon\circ \mu_{\chi\circ \epsilon}$ where $-\chi(g)=\chi(g)^{-1}~\forall g\in G$.  

\begin{example}\label{thetasquare}
Let $\theta_j=\iota_{g_j}\circ \rho_j\circ \epsilon,$ where $g_j\in \GL_n(R), j=1,2.$  Then 
\[\begin{array}{rcl}
\theta_1\circ \theta_2&=&\iota_{g_1} \circ \rho_1\circ \epsilon\circ \iota_{g_2}\circ \rho_2\circ \epsilon\\
&=&\iota_{g_1}\circ \rho_1\circ\iota_{^tg^{-1}_2}\circ \epsilon\circ  \rho_2\circ \epsilon \\
&=& \iota_{g_1}\circ \iota_{\rho_1(^tg_2^{-1})}\circ \rho_1\circ \rho_2\\
&=&\iota_{g}\circ \rho,\\
\end{array}\]
where $g=g_1\rho_1(^tg_2^{-1})$ and  $\rho=\rho_1\circ \rho_2.$
\end{example}

\subsection{Ring automorphisms} \label{ring-auto} 
Let $R$ be a ring such that $F[t]\subset R\subset K:=F(t)$ where $F$ is a subfield of $\bar {\mathbb F}_p$.   Any ring  automorphism $\rho:R\to R$ extends to an automorphism $\tilde \rho$ of $K$.   
Since $\tilde \rho$ preserves the set of all elements of finite order in $K^\times,$ it follows that $ \rho$ restricts to an automorphism of $F$.  
Since $F\subset \bar{ \mathbb F}_p$, $\rho|_{F}$ is a {\it Frobenius automorphism } $x\mapsto x^{p^r}$ (the value of $r$ may depend on $x\in F$).   Note that any automorphism $\phi$ of $F$ extends to a unique automorphism $\tilde \phi$ of $K$ that fixes $t$.    Evidently $\tilde \phi$ restricts to an automorphism of $R,$ again denoted $\tilde \phi$.   
We shall refer to $\tilde \phi\in \Aut(R)$ as a {\it Frobenius automorphism}.
If $\rho:R\to R$ is any 
automorphism then $\psi^{-1}\circ \rho=:\rho_1$ is an $F$-automorphism of $R$ where $\psi$ is the Frobenius 
automorphism of $R$ defined by $\rho|_F\in Aut(F)$.  

Let $\gamma=\left (\begin{smallmatrix}a&b\\c&d\end{smallmatrix}\right )\in GL(2,F).$   Let  $ M_\gamma\in Aut_F(K)$ be 
the {\it M\"obius automorphism} defined as $M_\gamma(t)=\frac{at+b}{ct+d}$.  It is convenient to write $\gamma$ to also 
denote the M\"obius automorphism $M_\gamma$.  Clearly $M_\gamma=M_{\gamma'}$ if and only if 
$\gamma'=\lambda.\gamma$ for some $\lambda\in F^\times$. 
Thus the group of all 
M\" obius automorphisms of $K$ is isomorphic to the group $\mathrm{PGL}(2,F)$.  When $F[t]\subset R\subset K$ and $\gamma$ 
is a M\"obius automorphism of $K$ that stabilizes $R$, we say that $\gamma|_R$ is a M\"obius automorphism and again denote this by $\gamma$.   A M\"obius automorphism is also referred to as a {\it  M\"obius transformation.}  

Let  $\phi:F\to F$ be an automorphism.  Since $F\subset \bar{\mathbb F}_p$, $\phi$ is a Frobenius automorphism.   
We have the induced  automorphism of $\phi$ on $\mathrm{PGL}(2,F)$. Thus we 
obtain an action of $\Phi=\Aut(F)$ on $\mathrm{PGL}(2,F)$.   We regard $\phi$ as an automorphism $\phi:R\to R$
where $\phi(t)=t$, and also when $R=K$.  We have the commutation relation \[\phi\circ \gamma=\phi(\gamma)\circ \phi.\]
  To see this, 
we need only note that, writing $\gamma=\bigl(\begin{smallmatrix} a&b\\c&d\end{smallmatrix}\bigr)$, 
$\phi\circ \gamma(t)=\phi((at+b)/(ct+d))=(\phi(a)t+\phi(b))/(\phi(c)t+\phi(d))=\phi(\gamma)(\phi(t))$ and $\phi\circ \gamma(\lambda)=\phi(\lambda)~\forall \lambda\in F$. 

We shall denote by $\Aut_F(R)$ the group of $F$-automorphisms of $R$.  
\begin{lemma} \label{autK}
(i) Any $F$-automorphism of $K=F(t)$ is a M\"obius transformation $\gamma:K\to K$. 
The group $\Aut(K)$ is a semidirect product $\Aut_F(K)\rtimes \Phi=\mathrm{PGL}(2,F)\rtimes \Phi$ where $\Phi$ is the group of all automorphisms of $F$.  \\
(ii) The group $\Aut(R)$ is a semidirect product $\Aut_F(R)\rtimes \Phi\subset \Aut(K)$.  In particular, 
any automorphism $\rho:R\to R$ can be expressed uniquely 
as $\rho=\rho_0\circ \varphi$ where $\varphi$ is a Frobenius automorphism and $\rho_0$ is a M\"obius automorphism of $R$. 
\end{lemma}
\begin{proof}  (i) 
By the discussion preceding the statement of the lemma, we have a split exact sequence 
\[ 1\to Aut_F(K)\to Aut(K)\to \Phi\to 1\]
where $Aut(K)\to \Phi$ is the restriction to $F$.  The splitting is given by $\varphi\mapsto \tilde \varphi$ (in the above notation) which is indeed an automorphism.  So to complete the proof it suffices to show that 
any $F$-automorphism $\sigma:K\to K$ is a M\"obius automorphism.   Let $\tau:K\to K$ be the inverse of $\sigma$.

Let $\sigma(t)=f(t)/g(t)$ and let  $\tau(t)=h(t)/k(t)$, where $f(t),g(t),h(t),k(t)\in F[t]$ where $\gcd(f(t),g(t) )=1=\gcd(h(t),k(t))$.  We need to show that 
$\deg(f), \deg(g) <2$.  We assume that $\deg(f)\ne deg(g)$; otherwise we may replace $\sigma$ by the composition 
the automorphism $t\mapsto \sigma(t)-\alpha$ where we choose $\alpha$ such that 
$f(t)-\alpha g(t)$ has degree strictly less than $\deg(g)$.  Since $t\mapsto 1/t$ defines an automorphism we may assume without loss of generality that $m:=\deg(f)>\deg(g)=:n.$ 
Suppose $n=0$.   Then $g(t)\in F$ and $\sigma(t)\in F[t]$ is a polynomial of degree at least $1$. If $\deg(f(t))=1$, then 
$\sigma$ ia a M\"obius transformation.  If $\deg(f(t))\ge 2$, then it is readily seen that $t$ is not in the image of $
\sigma$.  Now suppose that $n=\deg(g(t))\ge 1$.

Let $f(t)=\sum_{0\le j\le m} f_jt^j, g(t)=\sum_{0\le i\le n}g_it^i$ where $f_j,g_i\in F$.
We have that \[\begin{array}{rcl} t & =& \tau(\sigma(t))\\ 
&=& \tau(\frac{f(t)}{g(t)})\\ 
&=& \frac{f(\tau(t))}{g(\tau(t)}\\ 
&=& \frac{f(h(t)/k(t))}{g(h(t)/k(t))}\\ 
&=& (\sum f_jh^jk^{-j})/(\sum g_ih^ik^{-i}).\\
 \end{array}\]
 This leads to the following equality in $F[t]$: 
\[ t(\sum_{0\le i\le n}g_ih^ik^{m-i})=\sum_{0\le j\le m} f_jh^jk^{m-j}.\]
The left hand side of the above equality is divisible by $k(t)$ as $m>n$ whereas the right hand side is not as $f_m\ne 0$ and 
$\gcd(h(t),k(t))=1$.   
This contradiction shows that $n=0$ in which case we have already shown that $\deg(f)=1$ 
and so $\sigma$ is a M\"obius automorphism.  

(ii).  As already noted, every automorphism of $F$ extends to a Frobenius automorphism of $R$.  Since any $F$-automorphism of $R$ extends uniquely to an automorphism of $K$, the assertion follows from (i). 
\end{proof}

Let $K_q=\mathbb F_{q} (t)$ for each subfield $\mathbb F_q\subset F$ and let $R_q=R\cap K_q$.  Then $R=\cup R_q$ (where 
the union is over those values of $q$ such that $\mathbb F_q\subset F$). 

 When $\varphi\in Aut(R)$ is a Frobenius automorphism, it is clear that $\varphi$ restricts to an automorphism $\varphi_q$ of $R_q$ for any $q$.   By the above lemma applied to $F=\mathbb F_q$ we see that $\Aut(K_q)$ is 
a finite group and so is $\Aut(R_q)$.

\begin{corollary} \label{locallyfiniteorder}
 We keep the above notations.  
Let $G(R)=\GL(n,R)$ or $\SL(n,R)$.\\
(i) If $\rho=\rho_0\circ \varphi$ where 
$\varphi$ is a Frobenius automorphism and $\rho_0\in \PGL(2,\mathbb F_q)$, then 
$\rho_q:=\rho|_{R_q}$ induces an automorphism $\rho_q:G(R_q)\to G(R_q)$ of finite order. \\
(ii) If $g\in G(R)$, and $\rho\in \Aut(R)$, then the orbit of $g$ under the action of the cyclic group 
$\langle \rho\rangle\subset \Aut(G(R))$ is finite. 
\hfill $\Box$
\end{corollary}

Since $\mathbb F_q[t]\subset  R_q$ for any $q$, it follows that  if $R_q$ is a field, then $R_q=\mathbb F_q(t)$.
Since $R=\cup R_q$ and since $R$ is not a field, there exists a $q$ such that $R_q$ is not a field and so $R_\ell$ is 
also not a field if  $\mathbb F_q\subset \mathbb F_\ell$.

\begin{corollary} \label{Fixed} 
Let $q$ be such that $R_q$ is not a field.  

There exists an $s\in R_q\setminus F$ such that $\rho(s)=s$ for all automorphisms $\rho$ such that $\rho(R_q)=R_q$.

Also the subgroup $\SL_n(\mathbb F_p[s])\subset G(R)$ is element-wise fixed by all such ring automorphisms. 
\end{corollary}
\begin{proof} Let  $\rho\in \Aut(R)$.  Write $\rho=\rho_0.\varphi$ where $\varphi$ is a Frobenius automorphism of $R$ and $\rho_0$ is a M\"obius transformation.  Then $\rho$ restricts to an automorphism of $R_q$ if and only if
$\rho_0$ is defined over $\mathbb F_q$.  
Let $\rho_q$ be the restriction of $\rho$ to $R_q$.   Consequently $\rho(G(R_q))=G(R_q)$ if and only if $\rho_0$ is defined over $\mathbb F_q$.    Conversely, given $\rho_q\in \Aut(R_q)$ it extends to an automorphism of $R$.

Let $\mathfrak R_q=\{\rho\in \Aut(R)\mid \rho(R_q)=R_q\}$.  We have the restriction homomorphism 
$\mathfrak R_q\to \Aut(R_q)$ which is surjective.

By our hypothesis on $R$ and on $q$, 
there exists an element $f\in \mathbb F_q[t]\subset R_q$ which is not invertible.  The orbit $\Omega_f$ of $f$ under the action of $\mathfrak R_q$ is finite since the action factors on $R_q$ through $\Aut(R_q)$, which is a finite group.  
Let $s=\prod_{\rho\in \Aut(R_q)}\rho(f)$ which is the product of all the elements of $\Omega_f$.  
Clearly $s\in R_q$ and is fixed under all automorphisms in $\mathfrak R_q$.

The element $s$ is not  invertible since $f(t)$ is not invertible. 
In particular it is not in $F$.  So $\mathbb F_p[s]$ is isomorphic to a polynomial algebra.  
It is clear that $\SL_n(\mathbb F_p[s])$ is element-wise fixed by $\mathfrak R_q$. 
\end{proof}

The following corollary is immediate from Theorem \ref{aut-omeara} and Remark \ref{remarkonomeara}.

\begin{corollary} \label{transversal}

(i) Any outer automorphism of $\SL_n(R)$ is represented by one of the following automorphisms.  \\
\indent
(a) $\rho:\SL_n(R)\to \SL_n(R)$ induced by an $\mathbb F_p$-algebra automorphism $\rho$ of $R$ or 
$\rho\circ \epsilon$, where $\epsilon$ is the contragredient automorphism.  These are of finite order when $F$ is finite.\\
\indent 
(b) $\iota_{h(\alpha)}\circ \rho, \iota_{h(\alpha)}\circ\rho\circ \epsilon, \alpha\in R^\times$ where 
$h(\alpha)=\delta(I_{n-1},\alpha)\in \GL_n(R)$. 
These are  of finite order when $F$ is finite.

\noindent
(ii) Any outer automorphism of $\GL_n(R)$ is represented by one of the following automorphisms:\\
\indent (a) an automorphism 
$\rho$, induced by an automorphism of $R$ or $\rho\circ \epsilon$ where $\epsilon$ is the contragredient automorphism. These are of finite order when $F=\mathbb F_q$.\\
\indent 
(b) $ \mu_\chi\circ \rho, \mu_\chi\circ \rho\circ \epsilon,$ where $\chi$ is a suitable character $\chi:\GL_n(R)\to R^\times$. 

\noindent
(iii) Every automorphism of $\SL_n(R)$ extends to an automorphism of $\GL_n(R)$.  \hfill $\Box$
\end{corollary}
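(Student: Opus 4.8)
The plan is to read the stated normal forms directly off O'Meara's Theorem~\ref{aut-omeara} and then simplify each O'Meara factorization modulo the inner automorphisms of $G$, using the commutation relations of \S\ref{commutations} together with Lemma~\ref{perfect} and the analysis of $\mu_\chi$ in \S\ref{homothety}. Concretely, I would start by writing an arbitrary $\phi\in\Aut(G)$ as $\phi=\mu_\chi\circ\rho\circ\iota_g$ or $\phi=\mu_\chi\circ\rho\circ\iota_g\circ\epsilon$ with $g\in\GL_n(R)$, and then ask, in each of the two cases $G=\SL_n(R)$ and $G=\GL_n(R)$, how much of the conjugation factor $\iota_g$ and the homothety factor $\mu_\chi$ can be removed without changing the class of $\phi$ in $\Out(G)$. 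Throughout I would use that a ring automorphism of $R$ fixes the prime field $\mathbb F_p$ pointwise, hence is automatically an $\mathbb F_p$-algebra automorphism, so no reduction of $\rho$ itself is needed.

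For $G=\SL_n(R)$ I would first invoke Lemma~\ref{perfect}: since $n\ge 3$ the group is perfect, hence admits no nontrivial character, so $\mu_\chi=\mathrm{id}$ and $\phi=\rho\circ\iota_g$ (resp. $\rho\circ\iota_g\circ\epsilon$). Writing $\alpha=\det g\in R^\times$ and $g=g_0\,h(\alpha)$ with $g_0\in\SL_n(R)$, I get $\iota_g=\iota_{g_0}\circ\iota_{h(\alpha)}$, and commutation relation (i) gives $\rho\circ\iota_{g_0}=\iota_{\rho(g_0)}\circ\rho$ with $\rho(g_0)\in\SL_n(R)$ (as $\det\rho(g_0)=\rho(\det g_0)=1$). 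Hence $\iota_{\rho(g_0)}$ is inner in $\SL_n(R)$ and may be discarded, leaving the representative $\rho\circ\iota_{h(\alpha)}=\iota_{h(\rho(\alpha))}\circ\rho$ (resp. with $\epsilon$ appended), which is exactly the form (i)(b); the special case $\alpha=1$ is (i)(a). The datum $\alpha$ genuinely survives because $h(\alpha)\in\SL_n(R)$ only when $\alpha=1$, so that $\iota_{h(\alpha)}$ is outer for $\alpha\ne 1$.

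For $G=\GL_n(R)$ the conjugation factor becomes harmless: since $g\in\GL_n(R)$, the map $\iota_g$ is itself inner in $G$. I would verify that $\mu_\chi$ commutes with every inner automorphism — indeed $\mu_\chi\circ\iota_h(x)=\chi(hxh^{-1})\,hxh^{-1}=\chi(x)\,hxh^{-1}=\iota_h\circ\mu_\chi(x)$, because $\chi$ has abelian target and $\chi(x)I_n$ is central — so that $\phi\circ(\mu_\chi\circ\rho)^{-1}=\mu_\chi\circ\iota_{\rho(g)}\circ\mu_\chi^{-1}=\iota_{\rho(g)}$ is inner (and the same computation with $\epsilon$ appended, using $\epsilon^2=\mathrm{id}$, works verbatim). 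Thus modulo inner automorphisms $\phi\equiv\mu_\chi\circ\rho$ or $\mu_\chi\circ\rho\circ\epsilon$; finally the analysis of \S\ref{homothety} shows $\im\chi\subset F^\times$ (automatic when $R=F[t]$, since there $R^\times=F^\times$), giving the forms (ii)(b), with the trivial-character case being (ii)(a).

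It remains to justify the finite-order assertions, which I would deduce from \S\ref{ring-auto}: when $F=\mathbb F_q$ the group $\Aut(R)$ is finite, so $\rho$ has finite order, and since $\epsilon^2=\mathrm{id}$ commutes with $\rho$ by relation (ii), both $\rho$ and $\rho\circ\epsilon$ of (a) have finite order. For (i)(b) the factor $\iota_{h(\alpha)}$ has finite order precisely when $\alpha$ is a torsion unit, i.e. $\alpha\in F^\times$ (a root of unity in $\bar{\mathbb F}_p$); the point is that $\iota_{h(\beta)}=\mathrm{id}$ iff $h(\beta)$ is scalar iff $\beta=1$, and combining this with the finite order of $\rho$ through $\psi^k=\iota_{h(\alpha\rho(\alpha)\cdots\rho^{k-1}(\alpha))}\circ\rho^k$ yields finite order of the composite. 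I do not expect a serious obstacle: the statement is essentially bookkeeping on top of O'Meara's theorem, and the only substantive inputs — perfectness of $\SL_n(R)$ and the containment $\im\chi\subset F^\times$ — are already in Lemma~\ref{perfect} and \S\ref{homothety}. The one place demanding care is keeping track of which conjugations are inner in the group actually under consideration: for $\SL_n(R)$ only conjugations by elements of $\SL_n(R)$ are inner, which is exactly why the determinant datum $\alpha$ cannot be absorbed, whereas for $\GL_n(R)$ every $\iota_g$ disappears.
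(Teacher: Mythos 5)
Your proposal is correct and follows essentially the same route the paper intends: the corollary is stated there as an immediate consequence ($\Box$, no written proof) of O'Meara's Theorem~\ref{aut-omeara} combined with the commutation relations of \S\ref{commutations}, perfectness of $\SL_n(R)$ (Lemma~\ref{perfect}), the containment $\im(\chi)\subset F^\times$ from \S\ref{homothety}, and the finiteness of $\Aut(R)$ for finite $F$ from \S\ref{ring-auto}, which is exactly the bookkeeping you carry out (splitting $g=g_0h(\alpha)$, absorbing inner factors, commuting $\mu_\chi$ past conjugations). Your write-up merely makes explicit what the paper leaves to the reader, including the norm-type formula $\psi^k=\iota_{h(\alpha\rho(\alpha)\cdots\rho^{k-1}(\alpha))}\circ\rho^k$ justifying the finite-order assertions.
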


\subsection{Proof of Theorem \ref{fpbar} for $F$ a finite field} \label{prooffornge3}
We shall now prove Theorem \ref{fpbar} when $F=\mathbb F_q$ and $n\ge 3$.  The more general case when $F\subset \bar{\mathbb F}_p$ will be 
established in \S \ref{mainresults}.  
In order to emphasise this 
restriction we shall use $A$ instead of $R$ to denote a ring such that $\mathbb F_q[t] \subset A \subsetneq \mathbb F_q(t)$.

Let $G=\GL_n(A)$ or $\SL_n(A)$.  To show that $G$ has the $R_\infty$-property,  
it suffices to show that $R(\phi)=\infty$ for a set 
$\mathcal S$ of representatives of the outer automorphisms of $G$. We take $\mathcal S$ to be as in Corollary \ref{transversal}.

Consider the automorphism $\rho:G\to G$ induced by a ring automorphism $\rho:A\to A$.   
Let $S=\mathbb F_p[s]\subset A$ be as in Corollary \ref{Fixed}.  Then $S$ is 
 contained in the subring $\fix(\rho)\subset A$ and the group $G(S)\subset G(A)$ is element-wise fixed by $\rho$.  
 
  Set $x_m=e_{12}(s^m)e_{21}(-s^m)\in \SL_2(S)$ so that $x_m=
 \bigl(\begin{smallmatrix} 1-s^{2m} & s^m \\ -s^m & 1\end{smallmatrix} \bigr)$.  
 We observe that $\rho(x_m)=x_m=e_{12}(s^m)\epsilon(e_{12}(s^m))$ and that the $x_m$  satisfy the 
 polynomial $X^2+(s^{2m}-2)X+I_2=0$.   
We regard $x_m$ as also as an element of $\SL_n(S)$ by identifying it 
 with the block diagonal matrix $\delta(x_m,I_{n-2})$.  
These elements will play an important role in the our proofs as they will be shown to be in pairwise distinct $\phi$-twisted conjugacy classes 
 for many automorphisms of $G$.  The following lemma will play a crucial role in our proof.

\begin{lemma}\label{trace}
 Let $A$ be a ring such that $\mathbb F_q[t] \subset A \subsetneq \mathbb F_q(t)$. Fix $r\ge 1$. The elements $x_m=e_{12}(s^m)e_{21}(-s^m),\in \SL_n(A), m\ge 1,$ 
 are such that $\tr(x_ m^r)$ are pairwise distinct.  
\end{lemma}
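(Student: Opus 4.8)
The plan is to reduce everything to the $2\times 2$ block and then to a degree computation in $t$. Since $x_m$ is the block diagonal matrix $\diag(y_m,I_{n-2})$ with $y_m=\bigl(\begin{smallmatrix} 1-s^{2m} & s^m\\ -s^m & 1\end{smallmatrix}\bigr)\in\SL_2(A)$, we have $x_m^r=\diag(y_m^r,I_{n-2})$ and hence $\tr(x_m^r)=\tr(y_m^r)+(n-2)$. As the constant $n-2$ is the same for all $m$, it suffices to show that the scalars $\tr(y_m^r)$ are pairwise distinct.

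First I would record that $\det(y_m)=1$ and $\tr(y_m)=2-s^{2m}$, so that by Cayley--Hamilton $y_m^2=(2-s^{2m})\,y_m-I_2$. Writing $a_j:=\tr(y_m^j)$ and taking traces in the relation $y_m^{j}=(2-s^{2m})y_m^{j-1}-y_m^{j-2}$ yields the recurrence $a_j=(2-s^{2m})a_{j-1}-a_{j-2}$ with $a_0=2$, $a_1=2-s^{2m}$. Consequently $a_r=P_r(2-s^{2m})$, where $P_r\in\mathbb Z[w]$ is the monic polynomial of degree $r$ determined by $P_0=2$, $P_1=w$, $P_j=wP_{j-1}-P_{j-2}$ (the classical Chebyshev/Dickson-type polynomials). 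Since $P_r$ is monic of degree $r$ and $w=2-s^{2m}$ has degree $2m$ in $s$ with leading coefficient $-1$, the element $\tr(y_m^r)=P_r(2-s^{2m})$ is a polynomial in $s$ of degree exactly $2mr$ whose leading coefficient is $(-1)^r$.

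The last step is to pass from the degree in $s$ to the degree in $t$. When $A=\mathbb F_q[t]$ the element $s=\prod_{\gamma\in\mathbb F_q}(t+\gamma)^{q-1}$ is monic of degree $d=q(q-1)$ in $t$, so $\tr(y_m^r)$ has $t$-degree exactly $2mrd$. When $A=\mathbb F_q[t,t^{-1}]$ the element $s=t^{q-1}+t^{1-q}$ has top term $t^{q-1}$, and in the symmetric Laurent polynomial $\tr(y_m^r)$ the highest power of $t$ occurring comes from the top term $(-1)^r s^{2mr}$ and equals $t^{2mr(q-1)}$. In either case this (top) $t$-degree is a strictly increasing function of $m$, so distinct values of $m$ yield elements of distinct degree, and in particular $\tr(x_m^r)=\tr(y_m^r)+(n-2)$ are pairwise distinct.

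I expect the only delicate point to be the claim that the leading coefficient does not vanish in characteristic $p$; indeed this is precisely why the argument survives positive characteristic. The polynomials $P_r$ are monic over $\mathbb Z$, hence remain monic after reduction modulo $p$, and the leading coefficients that actually arise---namely $(-1)^r$ coming from $(2-s^{2m})^r$, and the binomial coefficient $\binom{2mr}{0}=1$ giving the top term of $(t^{q-1}+t^{1-q})^{2mr}$---are never divisible by $p$. Once this is verified, the remainder is routine degree bookkeeping.
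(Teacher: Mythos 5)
Your proof is correct and follows essentially the same route as the paper: reduce to the $2\times 2$ block, use Cayley--Hamilton to get the trace recursion $a_j=(2-s^{2m})a_{j-1}-a_{j-2}$, and conclude that $\tr(x_m^r)$ has leading term $(-1)^r s^{2mr}$, so the degrees $2mr$ separate the values of $m$. The only difference is cosmetic: you package the induction via the monic Dickson/Chebyshev-type polynomials $P_r$ and explicitly convert $s$-degree to $t$-degree (the paper leaves implicit that $s$ is transcendental over $\mathbb F_p$, so distinct $s$-degrees already give distinct elements of $A$).
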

\begin{proof} 
Set $x:=e_{12}(u)e_{21}(-u)=\bigl(\begin{smallmatrix} 1-u^2 & u \\ -u& 1\end{smallmatrix} \bigr)\in \GL_2(\mathbb F_p[u])$.  
We see that $\tr(x)=2-u^2, \tr(x^2)=2-4u^2+u^4$.   As the characteristic polynomial of $x$ is $X^2-(2-u^2)X+1$, we obtain 
the relation $\tr(x^r)=(2-u^2)\tr(x^{r-1})-\tr(x^{r-2})$ for any $r\ge 3$.   It follows by induction that $\tr(x^r)$ is a polynomial in $u$ of degree 
$2r$ with leading coefficient $(-1)^r\in \mathbb F_p$.    

The last assertion still holds when $x$ is viewed as an element of $\GL_n(\mathbb F_p[u]), n\ge 3$.  Applying this to the elements $x_m\in \GL_n(A)$ 
defined above, $\tr(x_m^r)\in S=\mathbb F_p[s]\subset A$ is a polynomial in $s$ of degree $2rm$.   Hence $\tr(x_m^r), m\ge 1,$ 
are pairwise distinct.  
\end{proof}

We are now ready to prove Theorem  \ref{fpbar} when $F$ is a finite field.   

\noindent
\begin {proof}
The proof will depend on the type of automorphism as listed in 
Corollary \ref{transversal}. The symbol $\rho$ will always denote an automorphism of $G$ induced by a 
ring automorphism of $A$, $\epsilon$, the contragredient, $\iota_g$, the conjugation by a $g\in \GL_n(A)$, etc. 

First we consider the case when $G=\GL_n(A)$.

{\it Type} $\rho$:   Note that $x_m\in \fix(\rho)$.  
Taking $r=o(\rho)$ in Lemma \ref{trace}, we 
 see, by 
Lemma \ref{fixedgroup}, that the $x_m^r,m\ge 1,$ are in pairwise distinct $\rho$-twisted conjugacy classes and so $R(\rho)=\infty$.

{\it Type} $\rho\circ \epsilon$:  Let $\theta=\rho\circ \epsilon$.   
Since $\rho \circ \epsilon=\epsilon\circ \rho$ and since $\epsilon^2=id$ we have 
$\theta^2=\rho^2$.
We shall show that $e_{12}(s^k), e_{12}(s^m)$ are not in the same $\theta$-twisted 
conjugacy class if $m>k\ge 1$. 

Suppose that $e_{12}(s^m)=z e_{12}(s^k) \theta(z^{-1})$.   Applying $\theta$ to both sides we obtain 
$e_{21}(-s^m)=\theta(z) e_{21} (-s^k) \theta^2(z^{-1})$.   Multiplying the two equations and using $\theta^2=\rho^2$ we obtain that 
\[x_m=e_{12}(s^m)e_{21}(-s^m)=z e_{12} (s^k) e_{21}(-s^k)\rho^2(z^{-1})=zx_k\rho^2(z^{-1}).\eqno(2)\]   
That is, $x_k,x_m$ are in the same $\rho^2$-twisted conjugacy class.  By what has been shown in the case of `{\it Type $\rho$}',  this is a contradiction. It follows that 
$R(\theta)=\infty$.

{\it Type} $\theta=\mu_\chi\circ \rho$:  

Let $\theta=\mu_\chi\circ \rho$ and let $r=o(\rho)$.  We claim that the elements $x_m^r\in \SL_n(A), 
m\ge 1$, are in pairwise distinct $\theta$-twisted conjugacy classes.  Suppose that 
$x_k=z x_m \theta (z^{-1})$ for some $z\in \GL_n(A)$.  Note that $\theta(z^{-1})=\rho(z^{-1})u$ where 
$u:=\chi(\rho(z^{-1}))I_n$.  

Thus $x_k=zx_m\rho(z^{-1})u$.   
Since $u$ is in the centre of $\GL_n(A)$, for any $j\ge 1$, applying $\theta $ 
repeatedly, we obtain that for any $j\ge 0$, 
$x_k=\theta^j(x_k)=\rho^j(z) x_m \rho^{j+1}(z^{-1})u_j$ for a suitable scalar matrix $u_j$ in $\GL_n(A)$.  Setting  $r:=o(\rho)$,   
we are led to the equation $x_k^r=zx_m^rz^{-1}v$ for some scalar matrix $v\in \GL_n(A)$. Assuming $v=\delta(a,\ldots,a)$ and taking determinant on the both sides of the above equation, we obtain that $a^n=1$, i.e., $a$ is a torsion element in $A$ and hence $v$ is a scalar matrix in $\GL_n(\mathbb{F}_q)$. Now, we take trace on both sides of the equation $x_k^r=zx_m^rz^{-1}v$ and get $\textrm{tr}(x_k^r)=a\textrm{tr}(x_m^r)\in \mathbb F_q[s]$.  This is a contradiction to Lemma \ref{trace} as the degree of $\textrm{tr}(x_j^r)$ as a 
polynomial in $s$ equals $2jr$.  This shows that $R(\mu_\chi\circ \rho)=\infty$.

{\it Type} $\mu_\chi\circ \rho\circ \epsilon$: The proof that $R(\mu_\chi\circ \rho\circ \epsilon)=\infty$ uses $e_{12}(s^m)\in \SL_n(A)$ and  is similar to the proof for the type $\rho\circ \epsilon$, just 
as the above proof for $\mu_\chi\circ \rho$ parallels the proof for type $\rho$.

This completes the proof that $\GL_n(A)$ has the $R_\infty$-property for $n\ge 3$.  

It remains to consider the case of automorphisms of $\SL_n(A)$ as in Corollary \ref{transversal}(i).    

{\it Type} $\iota_h\circ \rho$:
Consider an automorphism $\phi$ of $\SL_n(A)$ of the form $\phi=\iota_h\circ \rho$ with $h=h(a)\in H$ with $a\in A^\times$ as in Corollary \ref{transversal}(ii).  
Suppose that $k$ and $m$ are distinct but $x_k=z x_m\phi(z^{-1})=z x_m h\rho(z^{-1})h^{-1}$.  So $x_kh$ and $x_mh$ are $\rho$-twisted conjugates in $\GL_n(A)$. We apply Lemma \ref{fixedgroup}(i) to $\rho$.  Setting $r=o(\rho)$ we obtain, 
$\prod_{0\le j<r}\rho^j(x_kh)$ and $\prod_{0\le j<r}\rho^j(x_mh)$ are conjugates.  Since $x_k$ and $\rho^j(h)=h(\rho^j(a))$ commute, we obtain that 
$\prod_{0\le j<r}\rho^j(x_kh)=x_k^rh(\prod \rho^j(a))$ and the same holds when $k$ is replaced by $m$. Now, $\tr(x_k^rh(\prod \rho^j(a)))=n-3+\prod \rho^j(a)+\tr(x_k^r)$ and so $\tr(\prod_{0\le j<r}\rho^j(x_kh))=\tr(\prod_{0\le j<r}\rho^j(x_mh))$ implies $\tr(x^r_k)=\tr(x^r_m)$, contradicting Lemma \ref{trace}.

{\it Type} $\iota_h\circ \rho\circ \epsilon$:
Finally, it remains to consider automorphisms of $\SL_n(A)$ of the form $\psi:=\iota_h\circ \rho\circ \epsilon$ with 
$h=h(a), a\in A^\times$, as in Corollary \ref{fixedgroup}.  
We assert that $e_{12}(s^m), m\ge 1,$ are in pairwise distinct $\psi$-twisted conjugacy classes.
Suppose that there exist positive integers $k,m$ such that 
\[e_{12}(s^m)=z e_{12}(s^k)\psi(z^{-1}).\]  
Applying $\psi$ to both sides of this equation we obtain \[ e_{21}(-s^m) =\psi(z) e_{21}(-s^m)\psi^2(z^{-1}).\]
Multyplying the two sides of the two equations and using $x_m=e_{12}(s^m)e_{21}(-s^m)$ we have 
\[x_m=z x_k \psi^2(z^{-1}).\]  Since, by Example \ref{thetasquare}, $\psi^2=\iota_{h\rho(h^{-1})} \circ \rho^2=\iota_{h(a\rho(a^{-1}))}\circ \rho^2$, it is of the previous type, namely, ``{\it Type $\iota_h\circ \rho$}".  
So  $x_m,x_k$ are not $\psi^2$-twisted conjugates unless  $m= k$.   Therefore we conclude 
that that $R(\psi)=\infty$, completing the proof.

\end{proof}


 \section{Proofs of Theorem \ref{fpbar} and Theorem \ref{sandwich}} \label{mainresults} 

Let $R$ be a ring such that $F[t]\subset R\subsetneq K:=F(t)$ where $F$ is a subfield of $\bar {\mathbb F}_p$ 
and $t$ is a variable.

Let $G$  denote one of the groups $\GL_n(R), \SL_n(R), n\ge 3$.  
The proof of Theorem \ref{fpbar} for $F\subset \bar{\mathbb F_p}$ is similar to the special case when $F=\mathbb F_q$.   In fact, most of the proof in the general case reduces to the special case.  For this reason we shall omit most of the details.  

Let $\rho \in Aut(R)$ be defined over $\mathbb{F}_q$ for some $q$. 

By our choice of $q$, if $\mathbb F_q\subset \mathbb F_\ell \subset F$, we see that 
$R_\ell:= R \cap \mathbb F_\ell(t)\subset F(t)$ is stable by $\rho$.  Also, since $R$ is not a field, we may (and do) assume that 
$R_\ell $ is not a field.  
Note that $R=\cup R_\ell$ where the union is over such values of $\ell $ that $ R_\ell$ is not a field.  

Consequently, the groups $G_\ell:=\GL_n(R_\ell), \SL_n(R_\ell)$ are stable by 
$\rho$ and Theorem \ref{fpbar} holds for each of them.   
We observe that $G$ is the union of the groups  $G_\ell$.

\subsection{Proof of Theorem \ref{fpbar}}
We are now ready to prove Theorem \ref{fpbar}.  

\noindent 
{\it Proof of Theorem \ref{fpbar}.}
As observed already,  with notations from \ref{transversal}, 
we need only consider the automorphisms $\phi=\mu_\chi\circ \rho, \mu_\chi\circ \rho\circ \epsilon$ when $G=\GL_n(R)$,  
and, when $G=\SL_n(R)$, the automorphisms $\phi=\iota_h\circ \rho, \iota_h\circ \rho\circ \epsilon$ where $h=h(a)=\delta(I_{n-1},a), a\in R^\times$.

Let $\rho \in Aut(R)$ be defined over $\mathbb{F}_q$ for some $q$. Let $s \in R_q$ be as in Lemma \ref{Fixed} and $x_m=e_{12}(s^m)e_{21}(-s^m)\in G_q=\GL_n(R_q), m\ge 1,$ be as in Lemma \ref{trace} where $R_q=R \cap \mathbb F_q(t)$.  
Then $x_m\in \fix(\rho)$.  
Suppose that there exists an element $z\in G$ such that 
$x_k=z x_m\rho(z^{-1})$ with $k\ne m$.   There exists $\ell=q^d=p^{de}$ a sufficiently large power of $q$ such that 
$\mathbb F_\ell\subset F$ and  $x_k,x_m, z\in G_\ell$.   Then $\rho^N|_{G_\ell}=id$ where $N:=de$.  
This implies, by Lemma \ref{fixedgroup}  that $x^N_m$ and $x^N_k$ are conjugates in $G_\ell$.   
This contradicts Lemma \ref{trace} and we conclude that $R(\rho)=\infty$.   The proof for $\rho\circ \epsilon$ is 
similar to the proof of the corresponding type of automorphism in Theorem \ref{fpbar} for $n\ge3$ given in \S\ref{prooffornge3}.

Now let $\phi=\mu_\chi\circ \rho$.   
    
Suppose that $x_k\sim_\phi x_m$ for some $k\ne m$.
Let $z\in G$ such that $x_k=zx_m \phi(z^{-1})=zx_m\rho(z^{-1})uI_n$ where $u=\chi (\rho(z^{-1}))\in R^\times$.  Then there exists an $\ell=q^d=p^{de}$ for a sufficiently large $d$ so that $z\in G_\ell$.  Then $\rho^{de}|_{G_\ell}=id$.  
Applying $\rho$ repeatedly to 
both sides of this equation, we obtain $x_k= \rho^{j}(z) x_m \rho^{j+1}(z^{-1}).u_jI_n$ 
for $j\ge 1$ for suitable $u_j\in R^\times$.   
Multiplying these equations in order for $0\le j<de$ and using the fact that $\rho^{de}(z^{-1})=z^{-1}$ 
we obtain that $x_k^{de}=z x_m^{de} z^{-1}. vI_n$ for some $v\in R^\times$. First taking determinant on both sides of the equation, we observe that $v$ is a torsion element of $R$ and so $v \in F^\times$. Now taking trace on both sides we get 
$\textrm{tr}(x_k^{de})=v\textrm{tr}(x_m^{de})$.  This is a contradiction since $v\in F$ and the degrees of traces of $x_k^{de}, x_m^{de}$ as polynomials in 
$s$ are $2kde, 2mde$ respectively which are unequal as $k\ne m$.  Hence we conclude that $R(\phi)=\infty$ in this case. 
 
The proof is similar when $\phi=\mu_\chi\circ \rho\circ \epsilon$.  This completes the proof when $G=\GL_n(R)$.  

When $G=\SL_n(R)$, we need to show that $R(\phi)=\infty$  when $\phi=\iota_h\circ \rho, \iota_h \circ \rho\circ \epsilon$ where $h=h(a)=\delta(I_{n-1}, a),a\in R^\times$.   We choose $q=p^e$ so that $\rho$ restricts to $G_q$ and hence to $G_\ell$ for all $\ell=q^r$.  We choose $\ell$ so that $R_\ell$ is not a field.  The rest of the proof is as in the proof 
of  Theorem \ref{fpbar} for the automorphisms of $\SL_n(R_\ell)$ of the corresponding types, given in \S \ref{prooffornge3}.  The details are left to the reader. \hfill $\Box$


\subsection{Proof of Theorem \ref{sandwich}}\label{sandwichedgroups}
 
We begin by describing the multiplicative group $R^\times$ of all invertible elements of $R$ and the action of $\Aut(R)$ 
on it.

Let $\mathcal B$ be the set of all monic irreducible polynomials in $F[t]$ which are invertible in $R$ and let $S\subset F[t]$ 
be the multiplicative set generated by $\mathcal B$.  Then $R=S^{-1}F[t]\subset F(t)$.   
Any  $f(t)\in R^\times $ 
has a unique factorization $f(t)=af_1^{n_1}\cdots f_k^{n_k}$ where 
elements $f_j\in \mathcal B, n_j$ non-zero integers, and $a\in F^\times$.   
It follows that $R^\times $ is isomorphic 
to $F^\times \times U$ where $U$ is a free abelian group with basis $\mathcal B$.  
Any automophism of $R^\times$ maps $F^\times$ to itself and induces an automorphism of $R^\times/F^\times \cong U$. 

There is a bijective correspondence between subgroups $D\subset R^\times $ and subgroups $H(D)\subset \GL_n(R)$ that 
contain $\SL_n(R)$ where $H(D)=\{g\in \GL_n(R)\mid \det g\in D\}$.  
Recall that  $\SL_n(R)$ is perfect 
in view of the fact that $R$ is a Euclidean domain.  It follows
that $\SL_n(R)=[H(D),H(D)]=[\GL_n(R),\GL_n(R)]$. 
Hence $\SL_n(R)$ is characteristic in $H(D)$ for all $D\subset R^\times$.

\begin{lemma} \label{centralizer}
Suppose that $g\in H=H(D)$ commute with every element of $\SL_{n-1}(R)$.  Then $g=ah(b)$ 
for some $a,b\in R^\times$ such that $a^{n}b\in D$. 
\end{lemma}
\begin{proof}
Suppose that $g$ is not a diagonal matrix, say $g=(g_{ij})$ with $g_{k,m}\ne 0$ where $k\ne m$.  Then 
$e_{1k}(1)ge_{1k}(-1)\ne g\ne e_{1m}(1) ge_{1m}(-1)$.  Since at least one of $k,m$ is less than $n$ we get a 
contradiction.
Now suppose that $g$ is a diagonal matrix $g=\delta(a_1,\ldots, a_n)$. 
If $a_i\ne a_j$ for some $i<j<n$, then $ge_{ij}(1)g^{-1}\ne e_{ij}(1)$ and the lemma follows.
\end{proof}

Let $H=H(D)$ and let 
$\theta:H\to H$ be an automorphism.  Then $\theta$ restricts to an automorphism $\theta':\SL_n(R)\to \SL_n(R)$. 
Replacing $\theta $ by $\theta\circ \iota_g$ for a suitable $g\in \SL_n(R)$ if necessary, we may (and do) assume that 
$\theta'=\iota_h\circ \rho\circ \eta$ where $h=h(a)=\delta(I_{n-1},a), a\in R^\times$, $\rho: \SL_n(R)\to \SL_n(R)$ is induced by an automorphism 
$\rho: R\to R$ of the ring $R$ and $\eta$ belongs to the cyclic group $\langle \epsilon\rangle\cong \mathbb Z/2\mathbb Z$.
Then $\theta'(\SL_{n-1}(R))=\SL_{n-1}(R)$.   
Evidently, $\theta'$ extends to a 
automorphism $\phi:\GL_n(R)\to \GL_n(R)$.  Any  extension $\phi$ equals $\mu_\chi\circ \iota_{h(a)} \circ \rho\circ \eta$ where $\mu_\chi$ is a homothety automorphism associated to a character $\chi:\GL_n\to R^\times$.  
It is not clear that $\theta\in \Aut(H)$ admits an extension $\phi:\GL_n(R)\to \GL_n(R).$   
When it does, following proposition guarantees that $R(\theta)=\infty$.

\begin{proposition}
Let $\theta\in \Aut(H)$ and suppose that $\theta=\phi|_{H}$ for some $\phi\in \Aut(\GL_n(R))$.  Then $R(\theta)=\infty$.
\end{proposition}
\begin{proof}
We keep the above notation.  By the above discussion, 
we need only consider the case $\theta'=\iota_{h(a)}\circ \rho\circ \epsilon$ or $\iota_{h(a)}\circ \rho$ in $\Aut(\SL_n(R))$. 
 First suppose that $\theta'=\iota_{h(a)}\circ \rho$. 
Choose  a non-zero, non-unit element $s\in R$ fixed by $\rho$.   Such an element exists by Lemma \ref{Fixed}. 
Choose $q$ so that $s\in R_q=R\cap \mathbb F_q[t]$.  Let $r$ be the order of 
the automorphism $\rho|_{R_q}$.    Since, the commutation relation  $\mu_\chi\circ\iota_h=\iota_h\circ \mu_\chi$ holds, we have $\phi=\mu_\chi\circ \iota_{h(a)} \circ \rho=\iota_{h(a)}\circ \psi$ where 
$\psi:=\mu_\chi\circ \rho$.  

 Consider the elements $x_m=e_{12}(s^m)e_{21}(-s^m)$, $m\ge 1$.  
 We claim that $x_k$ and $x_m$ are in distinct $\theta$-twisted 
conjugacy classes if $k\ne m$.   
Assume that $[x_m]_\theta =[x_k]_\theta$.  It follows that $[x_k]_\phi=[x_m]_\phi$.  This implies that $[x_k h(a)]_\psi =[x_m h(a)]_\psi$.  
Proceeding as in the case of {\it Type $\mu_\chi\circ \rho$} in the proof of Theorem \ref{fpbar} for finite fields, we obtain 
that \[x^r_m h(u) =z x_m^r vh(u). z^{-1}\] where $v\in R^\times, u=\prod_{0\le j<r}\rho^j(a)$.   Taking determinants on both sides, we obtain 
that $u=v^n u$ and so $v^n=1$.   Raising to the $n$-th power, we obtain 
$x_m^{rn} h(u^n)=zx_m^{rn}h(u^n)z^{-1}$.  Taking trace on both sides of the last equality we obtain that $\tr(x_m^{rn})=\tr(x_k^{rn})$. 
This 
contradicts Lemma \ref{trace} if $k\ne m$.   Hence $R(\psi)=\infty$.

The proof in the case when $\psi=\mu_\chi\circ \rho\circ \epsilon$ is similar and omitted.  
\end{proof}

Write 
$\rho=\rho_0\circ \varphi$ where $\rho_0$ is an $F$-automorphism and $\varphi$ is a Frobenius automorphism.
We choose $q$ so that $\rho_0$ is defined over $\mathbb F_q$ and that $a\in \mathbb F_q$.   We further assume that $R_q=R\cap \mathbb F_q(t)$ is not a field.    All these conditions can be met so long as $q$ is sufficiently large (and 
$\mathbb F_q\subset F$).

Recall that if $\rho\in \mathfrak R_q$, then $\rho|_{R_q}$ has finite order and so the same is true of the induced automorphism of $\SL_n(R_q)$.

We are ready to prove Theorem \ref{sandwich}

\begin{proof}  Let $\theta\in \Aut(H)$.  Replacing $\theta$ by $\iota_g\circ \theta$ if necessary, we may (and do) 
assume that $\theta'=\theta|_{\SL_n(R)}$ equals $\iota_{h(a)}\circ \rho$ or $\iota_{h(a)}\circ \rho\circ \epsilon$ 
where $a\in R^\times$, $\rho\in \Aut(\SL_n(R))$ is a ring automorphism $\rho:R\to R$ and $\epsilon $ is the contragredient. 
We first consider the case $\theta'=\iota_h\circ \rho$ where $h=h(a)$.  We choose $q$ so that $R_q$ is not a 
field and $\rho$ restricts to an automorphism of $R_q$.   

Let $x_m\in \SL_n(R_q)\subset H,m\ge 1,$ be as in Lemma \ref{trace}.   Suppose that $m>k\ge 1$ and that 
\[x_m=zx_k\theta(z^{-1}).\eqno(3) \]   
Substituting $z=yh(c), $ we obtain $\theta(z)=\theta'(y)\theta(h(c))= h(a)\rho(y)h(a^{-1})\theta(h(c))$.   
Since $h(c)$ commutes 
with $\SL_{n-1}(R)$, Lemma \ref{centralizer} implies that $\theta(h(c))=b_1h(c_1)$ for some $b_1, c_1\in R^\times$.  
Taking determinants on both sides of Equation (3), we obtain $\det(z)=\det (\theta(z))$ which implies that 
have $\theta(h(c))=b_1h (b_1^{-n}\rho(c))$ for some $b_1\in R^\times$.

Since $\det(x_m)=\det(x_k)=1=\det(y)$, Equation (3) implies that  $c=\det (z)=\det (\theta(z))=b_1^n. b_1^{-n}\rho(c)$ 
and so $\rho (c)=c$. It follows that $\theta(h(c))=b_1h(b_1^{-n}c)$.
  
Since $x_k$ commutes with $h(b)$ for all $b\in R^\times$,  we get 
$zx_k\theta(z^{-1})=yh(c)x_k b_1^{-1}h(b_1^nc^{-1})\theta'(y^{-1})=yh(c) x_k b_1^{-1}h(b_1^nc^{-1}) h(a) \rho(y) h(a^{-1})=
yx_k b_1^{-1} h(ab_1^n)\rho(y)h(a^{-1})$.   
Hence we obtain that 
\[ x_mh(a) =b_1^{-1} yh(ab_1^n)x_k\rho(y^{-1}).\eqno(4)\]
Applying $\rho^j$ to both sides and using $\rho(x_k)=x_k,\rho(x_m)=x_m, \rho(h(u))=h(\rho(u))$, we get
\[x_mh(\rho^j(a))=\rho^j(b_1^{-1}) \rho^j(y) h(\rho^j(ab_1^n)) x_k\rho^{j+1}(y^{-1}).\eqno(5)\] 
Replacing $q$ by a finite extension field in $F$ if necessary, we may (and do) assume that 
$y,c,a, b_1$ are all in $\mathbb F_q$. 
Multiplying these successively as $j$ varies from $0$ to $r-1$ where $r:=o(\rho|_{R_q})$, we obtain that 
\[x_m^r\prod_{0\le j<r}  h(\rho^j(a))=\prod_{0\le j<r} \rho^j(b_1^{-1}) y (\prod_{0\le j<r} h(\rho^j(ab_1^n)))x_k ^ry^{-1}\eqno(6)\]
since $\rho(x_k)=x_k,\rho(x_m)=x_m$. 

To simplify notations, set $ \beta:=\prod_{0\le j<r}\rho^j(b_1^{-1})$, 
$u:=\prod_{0\le j<r} \rho^j(a), 
v:=\prod_{0\le j<r}
\rho^j(ab_1^n)=\beta^{-n} u$.  
Equation (6) says that the matrices $x_m^rh(u)$ and $\beta x_k^rh(v)$ are similar.  Note that these are block diagonal matrices with block sizes 
$2, 1,\ldots, 1$.   Recall the definition $x_m=e_{12}(s^m)e_{21}(-s^m)=\delta(A_m,I_{n-2})$ where 
$A_m=\bigl(\begin{smallmatrix} 1-s^{2m} & s^m\\-s^m&1\end{smallmatrix}\bigr)$.   
The characteristic polynomials of $x_m^rh(u)$ and $\beta x_k^r h(v)$ are:  
$(X^2-\lambda_m X+1)(X-1)^{n-3}(X-u)$ and $(X^2-\beta\lambda_k X+\beta^2)(X-\beta )^{n-3}(X-\beta v)$ 
respectively, where $\lambda_m=\tr(A_m^r)$.   If at least one of $(X^2-\beta \lambda_k+\beta^2)$ or 
$(X^2-\lambda_m X+1)$ is irreducible (in $R$), then so is the other and the two must be equal. This implies 
that $\beta=\pm 1$ and $\lambda_m=\pm \lambda_k$.  This is a contradiction to Lemma \ref{trace}.  It follows 
that both of them factor into linear factors in $R$.   Let $\alpha_k^{\pm 1}, \alpha_m^{\pm 1}$ be the eigenvalues 
of $A_k,A_m$ respectively.  We do not assume that these eigenvalues are in $R$. However, 
$\alpha_k^r,\alpha _k^{-r}, \alpha_m^r,\alpha_m^{-r}$ are all in $R$.  By Lemma \ref{trace}, $\alpha_k^r\ne \alpha_k^{-r}$ 
and $\alpha_m^r\ne \alpha_m^{-r}$.  

Suppose that $\beta\in R^\times$ is a torsion element. Say $\beta^\ell=1$.  Then we may raise to the $\ell$-th power 
both sides of Equation (6) and obtain the same equation in which $r$ is replaced by $r\ell$ and $\beta$ by $1$. 
Consequently, we obtain that traces of $A_m^{r\ell}$ and $A_k^{r\ell}$ are equal, contradicting Lemma \ref{trace}.
So, we must have that $\beta\notin F^\times$.   

Then the eigenvalues of $x_m^rh(u)$ (resp. $ \beta x_k^rh(v)$) form the following multiset 
$\mathcal M: \alpha_m^r,\alpha^{-r}_m, 1 $ with multiplicity $(n-3)$, and $u$ (resp. the multiset 
$\mathcal K:  \beta \alpha_k^r,\beta\alpha^{-r}_k, \beta $ with multiplicity $n-3$, and $\beta v$).

The rest of the proof will involve case consideration depending on the value of $n$.

{\it Case 1:} Suppse that $n\ge 5$.   Since $1$ occurs in $\mathcal M$ with multiplicity at least $2$, it occurs in $\mathcal K$ with the same multiplicity.   Since $\beta \alpha_k^r\ne \beta \alpha_k^{-r}$, at most one of them can 
equal $1$, and so we 
must have (a) $\beta=1$ or (b) $\beta v=1$.    
Since $\beta\notin F^\times$, we have that $\beta\ne 1$.    

Suppose that $\beta v=1$ (and $\beta \neq 1$).  Since $v=u\beta^{-n }$, we have $u=\beta^ {n-1}$.  Since $1$ occurs in $\mathcal M$ with multiplicity at least $2$, we must have 
$\beta \alpha_k^r=1$ or $\beta\alpha_k^{-r}=1$.  In either case, we have 
$\mathcal K=1, \beta^2, \beta, 1$ where $\beta$ occurs with multiplicity $(n-3)$.  Since $\alpha_m^{r}.\alpha_m^{-r}
=1$, we must have two distinct elements of $\mathcal K$ which are reciprocals of each other.  This means that 
$\beta^3=1$ or $\beta^2=1$ and so $\beta\in F^\times$, a contradiction.

{\it Case 2:} Suppose that $n=4$.  We know that $\beta\ne 1$.  Since $1$ occurs in $\mathcal M$, one of the following must hold:
(i) $\beta v=1$, or (ii) $\beta \alpha_k^r=1$, or (iii) $\beta\alpha_k^{-r}=1$.  
 
{\it Subcase} (i): Suppose that $\beta v=1$.  
 It follows that $u=\beta^{n-1}=\beta^3$.   Since $\beta$ occurs in 
$\mathcal K$, it should occur in $\mathcal M$.  Since $u\ne \beta\ne 1$, we must have 
$\alpha^{r}_m=\beta$ or $\alpha^{-r}_m=\beta$.  In any case we have $\mathcal M=\{\beta,\beta^{-1}, 1,\beta^3\}$.
It follows that $\{\beta\alpha_k^r,\beta\alpha_k^{-r}\}=\{\beta^3,\beta^{-1}\}$.   So we have 
$\alpha_k^r+\alpha_k^{-r}=\beta^2+\beta^{-2}$ and $\alpha_m^r+\alpha_m^{-r}= \beta+\beta^{-1}$. This leads to the 
relation $(\tr(A_m^r))^2-\tr(A_k^r)-2=0$.  Since $\tr(A_m^r), \tr(A_k^r)\in F[s]$ are  polynomials in $s$ of degree $2mr, 2kr$ 
respectively, and since $m>k$, the last equation cannot hold.  So we are led to the conclusion that $\beta v\ne 1$.

{\it Subcase} (ii):  Suppose that $\beta \alpha_k^r=1$.  Then $\beta \alpha_k^{-r}=\beta^2$ and $\mathcal K=\{1,\beta^2,\beta, \beta v\}$.  Since $\alpha_m^r,\alpha_m^{-r}\in \mathcal K$ and $\alpha_m^r\ne 1$, the product of some pair of  elements of $\{\beta^2,\beta,\beta v\}$ should equal $1$.  If $\beta^3=1$, then $\beta \in F^\times$, a contradiction.  Then there are two possibilities: (a) $\{\alpha_m^r,\alpha_m^{-r}\}=\{\beta,\beta v\}$ or (b) $\{\alpha_m^r,\alpha_m^{-r}\}=\{\beta^2,\beta v\}$. If $\{\alpha_m^r,\alpha_m^{-r}\}=\{\beta,\beta v\}$, then 
$\alpha_m^r+\alpha_m^{-r}=\beta+\beta^{-1}$.  Since $\beta \alpha_k^r+\beta\alpha_k^{-r}=1+\beta^{2}$, we obtain 
that $\alpha_k^r+\alpha_k^{-r}=\beta^{-1}+\beta=\alpha_m^r+\alpha_m^{-r}$.  Thus $\tr(A_m^r)-\tr(A_k^r)=0$ in 
$F[s]$, again a contradiction as $m>k$. If $\{\alpha_m^r,\alpha_m^{-r}\}=\{\beta^2,\beta v\}$, then $\alpha_m^r+\alpha_m^{-r}=\beta^2+\beta^{-2}$. Again, we have $\alpha_k^r+\alpha_k^{-r}=\beta^{-1}+\beta$. The last two equations imply that $(\tr(A_k^r))^2-\tr(A_m^r)-2=0$. Since $\tr(A_m^r), \tr(A_k^r)\in F[s]$ are  polynomials in $s$ of degree $2mr, 2kr$ 
respectively, we have $m=2k$. So $\{x_k \mid k\in \mathbb N, k\equiv 1\mod 2\}$  is an infinite set of elements in $H$ that are in pairwise distinct $\theta$-twisted conjugacy classes. 

{\it Subcase} (iii):   Suppose that $\beta\alpha_k^{-r}=1$.  Then $\beta\alpha^r_k=\beta^2$ and so 
$\mathcal K=\{1,\beta^2,\beta,\beta v\}$.  Rest of the proof is exactly as in subcase (ii). 

This completes the proof in the case when $\theta'=\theta|_{\SL_n(R)}=\iota_h\circ \rho$.

Next consider the case  $\theta'=\iota_h\circ \rho \circ \epsilon$ where $h=h(a), a\in R^\times$, $\rho$ is is induced by a ring automorphism $\rho:R\to R$, and $\epsilon$ is the contragredient.   
As in the proof of Theorem \ref{fpbar} for the type $\iota_h\circ \rho\circ \epsilon$, we consider the sequence of elements 
$\{e_{12}(s^m)\}_{m\ge 1}$ and show that an infinite subsequence of this sequence are in pairwise distinct 
$\theta$-twisted conjugacy classes.  Here again, $s\in R$ is a non-zero non-invertible element fixed by $\rho$.  

Suppose that $m>k\ge 1$ and the following holds: 
\[ e_{12}(s^m)=z e_{12}(s^k)\theta(z^{-1})  \eqno (7)\]
where $z=yh(c)$ with $y\in \SL_n(R), c\in D$.   
Applying $\theta$ to both sides of the above equation we get 
\[ e_{21}(-s^m)=\theta(z) e_{21}(-s^m) \theta^2(z^{-1}).\eqno(8)\]
We multiply each side of Equations (7) and (8) to obtain, using $e_{12}(s^m)e_{21}(-s^m)=x_m$,  
\[ x_m=z x_k\theta^2(z^{-1}). \eqno(9)\]
From Example \ref{thetasquare}  we have $\phi:=\theta^2= \iota_{h(a\rho(a^{-1}))}\circ \rho^2$.

Therefore, setting $\alpha=a\rho(a^{-1})$ we obtain that $x_m$ and $x_k$ are $\phi$-twisted conjugates.

Suppose that $n\ge 4$ or $D\subset F^\times$.  
By what has been established already, the sequence $\{x_k\}_{k\ge 1}$ contains an infinite subsequence 
whose terms are in pairwise 
distinct $\phi$-twisted conjugacy classes.  Therefore the sequence $\{e_{12}(s^k)\}_{k\ge 1}$ contains an 
infinite subsequence whose terms are in pairwise distinct $\theta$-twisted conjugacy classes. 
This completes the proof.


\end{proof}
 
 Let $n=3$.  Suppose that $u=\beta v$.    Since $v=\beta^{-3}u$, we obtain that $u=\beta ^{-2}u$ and so $\beta^2=1$ a contradiction.  

There remains only the following four possibilities are :\\
 (a) $u=\beta\alpha_k^r, \alpha_m^r=\beta v,\alpha_m^{-r}=\beta\alpha^{-r}_k$,\\
(b)  $u=\beta\alpha_k^{-r}, \alpha_m^r=\beta v, \alpha_m^{-r}=\beta\alpha_k^{r}$, \\
(c) $u=\beta \alpha_k^{r}, \alpha_m^r=\beta \alpha_k^{-r}, \alpha_m^{-r}=\beta v$, \\
(d) $u=\beta \alpha_k^{-r}, \alpha_m^r=\beta \alpha_k^r,\alpha_m^{-r}=\beta v$.\\

Each of these possibilities appears to be consistent with the relation $v=\beta^{-3}u$ and we have not been 
able to show that $R(\theta)=\infty$ in these cases.  

We end this section with the following remarks.

\begin{remark} \label{concludingremarks}{\em 
(i)   Suppose that $R^\times $ contains infinitely many irreducible elements of $F[t]$.   For example, when $F$ is infinite, let $\mathcal S$ be the linear polynomials $t+\lambda, \lambda\in F$.  When $F=\mathbb F_q$, we take $\mathcal S$ to be the set 
the polynomials of $\{t^{q^r}-t+1\mid r\ge 1\}$.  For any {\it proper} infinite subset $S\subset \mathcal S$, we take $R$ to be 
the localization of $F[t][1/f(t); f(t)\in S]$ so that $F[t]\subset R\subsetneq F(t)$.
Then the group $R^\times/F^\times$ is a free abelian group of infinite rank.   It is easy that the there are $\aleph_1$ many pairwise non-isomorphic 
subgroups $D\subset R^\times$.  
The corresponding collection of groups $\{H(D)\}$ are then pairwise non-isomorphic.  When $n\ge 4$, each of them 
has the $R_\infty$-property. \\
(ii)  Let $F$ be an algebraically closed field of characateristic $p>0$.  Lang \cite{lang} has shown that if $\rho:G\to G$ a Frobenius endomorphism $g\mapsto g^q$ where $q=p^e$, then the Lang map $L:G\to G$ defined as $L(z)
=z^{-1}.\rho(z)$ is surjective.  (Lang's result is more general. See also \cite{steinberg}.) 
The Frobenius endomorphism is an isomorphism of the underlying abstract group $G$, although it is 
not an isomorphism of {\it algebraic varieties}. 
This shows, in 
particular, that $\GL_n(F), \SL_n(F)$ do not have the $R_\infty$-property. In fact, as has been observed already 
by Felshtyn and Nasybullov \cite{fn}, the Lang-Steinberg theorem implies that any semisimple 
connected linear algebraic group $G(F)$ fails to have the $R_\infty$-property.  
}
\end{remark}


\noindent
{\bf Acknowledgments:}  This project was initiated when both the authors were members of The Institute of 
Mathematical Sciences (Homi Bhabha Research Institute), Chennai.  We are grateful to the referee 
of an earlier version of this paper for his/her many valuable comments and suggestions.


\begin{thebibliography}{GG3}

\bibitem[BFG]{bfg} Bleak, C,; Fel'shtyn, A.; Gon\c calves, D. Twisted conjugacy classes in R. Thompson's group $F$. 
Pacific Jour. Math. {\bf 238} (2008) 1--6. 

\bibitem[FH]{felshtyn-hill} Fel'shtyn, A.; Hill, R.  The Reidemeister zeta function with applications to Nielsen theory 
and a connection with Reidemeister torsion, K-Theory 8 (1994) 367--393.

\bibitem[FN]{fn}  Fel'shtyn, A.; Nasybullov, T. 
The $R_\infty$ and $S_\infty$ properties for linear algebraic groups, J. Group Theory, {\bf 19}  (2016), 901--921.

\bibitem[GMi]{gm} Garge, S.; Mitra, O. Twisted conjugacy in classical groups over certain integral domains of characteristic $p>0$. 
In preparation. 
\bibitem[GK]{dd} Gon\c calves, D.; Kochloukova, D. Sigma theory and twisted conjugacy classes. Pacific Jour. Math. 
{\bf 247} (2010), 335--352.
\bibitem[GS1]{g-s} Gon\c calves, D. L.; Sankaran, P. Twisted Conjugacy in Richard Thompson's Group $T$ . Arxiv:1309.2875v2[Math:GR].

\bibitem[GS2]{gs-pjm-2016} Gon\c calves, D. L.; Sankaran, P. Sigma theory and twisted conjugacy, II: Houghton groups an pure symmetric automorphism groups, Pacific Jour. Math. {\bf 280} (2016) 349--369.
\bibitem[GSS]{gss} Gon\c calves, D.; Sankaran, P.; Strebel, R. Groups of PL homeomorphisms admitting nontrivial invariant characters. Pacific Jour. Math. {\bf 287} (2017) 101--158. 
\bibitem[GW]{g-w} Gon\c calves, D. L.; Wong, P.  Twisted conjugacy classes in nilpotent groups. J. reine angew. Math. {\bf 633} (2009), 11--27

\bibitem[HKLS]{hkls} Hrushovski, E;   Kropholler, P. H.;  Lubotzky, A;  Shalev, A.  Powers in finitely generated groups, Trans. Amer. Math. Soc. {\bf 348}
(1996) 291--304.

\bibitem[J]{jabara} Jabara, Enrico  Automorphisms with finite Reidemeister number in residually finite groups. 
Jour. Algebra, {\bf 320} (2008) 3671--3679.

\bibitem[La]{lang} Lang, S. Algebraic groups over finite fields.  Amer. Jour. Math. {\bf 78} (1956) 555--563.

\bibitem[LL]{ll} Levitt, G.; Lustig, M. Most automorphisms of a hyperbolic group have very simple 
dynamics. Ann. Sci. \'Ecole Norm. Sup.{\bf 33} (2000) 507--517. 

\bibitem[Ma]{mar} Margulis, G. A. \emph{Discrete Subgroups of Semisimple Lie Groups}, Ergebnisse Math. Grenzgebiete. 
{\bf 17} Springer-Verlag, Berlin, 1991. 


\bibitem[MiS]{mitra-sankaran-n2} Mitra, O; Sankaran, P. Twisted conjugacy in $\GL_2$ and $\SL_2$ over polynomial rings over a finite field. to appear in Geom. Dedicata.

\bibitem[MuS1]{ms-cmb} Mubeena, T.; Sankaran, P.  Twisted conjugacy classes in abelian extensions of certain linear groups. Canad. Math. Bull. {\bf  57} (2014), 132--140.
\bibitem[MuS2]{ms-tg} Mubeena, T.; Sankaran, P. Twisted conjugacy classes in lattices in semisimple Lie groups. Transform. Groups {\bf 19} (2014), 159--169.


\bibitem[N1]{nasyb1} Nasybullov, T. The $R_\infty$-property for Chevalley groups of types $B_l,C_l, D_l$ over integral domains, J. Algebra, {\bf 446} 
(2016),  489--498.

\bibitem[N2]{nasyb2} Nasybullov, T. Chevalley groups of types $B_n, C_n,D_n$ over certain fields do not possess the $R_\infty$ -property.  Topological Methods in Nonlinear Analysis, {\bf 56} (2020), 401--417.


\bibitem[O]{omeara} O'Meara, O. T. The automorphisms of the linear groups
over any integral domain. Jour. rein. angew.  Math. {\bf 223} (1966), 56--100.

\bibitem[Ro]{rosenberg} Rosenberg, Jonathan {\it Algebraic $K$-theory and its applications}. GTM {\bf 147} Springer-Verlag, New York, 1994. 




\bibitem[St]{steinberg} Steinberg, R. {\it Endomorphisms of Linear Algebraic Groups.} Memoirs of AMS, {\bf 80} 1968.
\end{thebibliography}
\end{document}